\newtheorem{theorem}{Theorem}[section]
\theoremstyle{definition}
\title[Optimal control strategies for tuberculosis]{Optimal control strategies
for tuberculosis treatment: a case study in Angola}
\author[Cristiana J. Silva and Delfim F. M. Torres]{}
\subjclass{Primary: 92D30; Secondary: 49M05.}
\keywords{Optimal control, Epidemiology, Tuberculosis, Treatment strategies, Angola.}
\email{cjoaosilva@ua.pt}
\email{delfim@ua.pt}
\thanks{The first author is supported by FCT post-doc grant SFRH/BPD/72061/2010}
\begin{document}

\maketitle

\centerline{\scshape Cristiana J. Silva and Delfim F. M. Torres}
\medskip
{\footnotesize
 \centerline{Center for Research and Development in Mathematics and Applications}
   \centerline{Department of Mathematics, University of Aveiro, 3810-193 Aveiro, Portugal}
}


\bigskip

\centerline{To Professor Helmut Maurer on the occasion of his 65th birthday.}


\begin{abstract}
We apply optimal control theory to a tuberculosis model
given by a system of ordinary differential equations.
Optimal control strategies are proposed to minimize
the cost of interventions. Numerical simulations
are given using data from Angola.
\end{abstract}


\section{Introduction}

Angola has one of the world's fastest growing economies.
It is a very rich country in terms of natural resources,
with oil, diamonds and hydroelectric power,
and a very fertile agricultural land.
However, Angola remains a third world country with
about one third of its population still depending
on subsistence agriculture \cite{url_tradingeconomics_Ang}.
In 2009, the life expectancy at birth was 52 years,
population median age was 17,
45\% of the population was under age 15 and only 4\% over 60 years,
and the total fertility rate per woman was 5.6.
Angola's total population was 10 million in 1990
and it is approximately double that figure today.
According to the Global Health Observatory Data Repository
of the World Health Organization (WHO),
in 2010 the infant mortality rate (probability of dying between birth and age 1)
was 98 per 1,000 live births; and the under-five mortality rate
(probability of dying by age 5) was 161 per 1,000 live births,
which corresponds to 121,000 deaths per year.

Tuberculosis (TB) is the leading cause of death among individuals infected with
the human immunodeficiency virus (HIV) and the rates of co-infection exceed
1,000 per 100,000 of total population in some countries in sub-Saharan Africa
\cite{Dye_etall_1999}. Between 1990 and 2005, TB incidence rates
tripled in African countries with high HIV prevalence \cite{avert:tb}.
Recent data from Luanda Sanatorium Hospital makes it clear that
TB is one of the leading causes of death in Angola
\cite{url_Min_Saude_Angola}. In 2010 alone, there were approximately
58,000 new TB cases in Angola. The country established
a National Program for the Control of Tuberculosis in 1981.
However, the civil war destroyed 70\% of Angola's health facilities,
leaving a substantial proportion of the population vulnerable to TB.
In 2007 the Ministry of Health confirmed that
the national program was implemented
in only 8.6\% of health units.
Moreover, the internal transport and distribution networks
for drugs are unreliable and 40\% of clinics
have experienced stockouts in TB drugs \cite{USAID_GOV}.
According to WHO's Global Tuberculosis Control Report of 2010, the figures
of TB in Angola are as follows: the induced mortality rate per 100,000 decreased
from 42 in 1990 to 30 in 2009; the incidence rate (including HIV)
increased from 205 in 1990 to 298 in 2009;
the new and relapse notification rate increased from 96 in 1990 to 223 in 2009;
the number of smear-positive cases increased from 3,804 in 2005 to 22,488 in 2009;
the number of extra-pulmonary TB cases increased from 266 in 1995 to 3,780 in 2010;
the relapse rate increased from 134 in 1995 to 2,444 in 2010; and the total number of retreated
individuals increased from 134 in 1995 to 7,776 in 2010. Treatment success rates
vary between 68\% in 2002 and 74\% in 2005, and are still within that range today,
well below WHO's target of 85\% \cite{USAID_GOV,WHOsite}.
The multidrug-resistant TB (resistant to, at least,
isoniazid and rifampicin) was about 2\% of all new cases in 2009.

TB remains a major health problem worldwide.
In 2011, the regional director of WHO for Africa stated that
despite the fact that member states have almost universally adopted
a strategy to stop tuberculosis, the implementation of this strategy
has been ineffective. Co-infections TB/HIV and TB multi-resistance
to drugs are some of the reasons for the
difficulty in controlling TB \cite{allafrica}.
Most cases of TB are caused by the \emph{Mycobacterium tuberculosis},
which is usually transmitted via airborne infection from someone who has active TB.
Approximately 10\% of infected people with \emph{Mycobacterium tuberculosis}
develop active TB disease, that is, approximately 90\% of infected people remain latent.
Latent infected TB people are asymptomatic and do not transmit TB, but may progress
to active TB through endogenous reactivation or exogenous reinfection \cite{Small_Fuj_2001,Styblo_1978}.
The anti-TB drugs developed since 1940 have helped to reduce the mortality rates significantly:
in clinical cases, cure rates of 90\% have been documented and in 2009 Angola's treatment
cure rate was about 72\% \cite{WHO_2011}. Three types of TB treatment are available:
vaccination to prevent infection; treatment to cure active TB;
and treatment of latent TB to prevent endogenous reactivation \cite{Gomes_etall_2007}.

Optimal control is a branch of mathematics that involves
finding optimal ways of controlling a dynamic system
\cite{Cesari_1983,Fleming_Rishel_1975,Pontryagin_et_all_1962,book:oc:Teo}.
While the usefulness of optimal control theory in epidemiology is nowadays well recognized
\cite{Maurer:2009,Maurer:2010,Maurer:2011,livro_Lenhart_2007,Rodrigues_Monteiro_Torres_2010,Rodrigues_Monteiro_Torres_Zinober_2011},
results pertaining to tuberculosis are a rarity \cite{SLenhart_2002},
and specific studies for the situation of Angola nonexistent.
Our aim in this paper is thus to use real data from Angola
to study optimal strategies for the minimization of the number
of active TB infectious and persistent latent individuals, taking into account
the cost of the measures for the treatment of these individuals.

The treatment of active infectious individuals
can take different amounts of time \cite{Kruk_etall_2008}.
Here we consider treatments with a duration of 6 months.
With these treatments one of the barriers to their success
is that the patients often do not complete them. Since after two months
patients no longer have symptoms of the disease and feel healed, many of them stop taking the medicines.
When the treatment plan is not completed, the patients are not cured and reactivation can occur and/or
the patients may develop resistent TB. The lack of support from family may cause
some patients to abandon treatment. Statistics from Luanda Sanatorium Hospital
show that 15 of the 270 TB patients abandoned the hospital during 2010.
A possible strategy to ensure patients complete the treatment involves supervision
and patient support. In fact, this is one the measures proposed
by WHO's Direct Observation Therapy (DOT) \cite{WHO_treatTB_2010}.
One example of treatment supervision consists of recording each dose of anti-TB drugs on the
patient treatment card \cite{WHO_treatTB_2010}. These measures are, however, very expensive,
since the patients must stay longer in the hospital or specialized people must be paid
to supervise patients until they finish their treatment. On the other hand, it is recognized
that the treatment of latent TB individuals reduces the chances of reactivation,
even though it is still unknown how treatment influences reinfection \cite{Gomes_etall_2007}.

In Section~\ref{sec:TBmodel} we consider a mathematical model for TB from \cite{Gomes_etall_2007},
which considers reinfection and post-exposure interventions. We alter this model by adding two control
functions: the first control $u_1$ is associated with preventive measures that help active infected patients
to complete the treatment; the second control $u_2$ governs the fraction of persistent
latent individuals that is put under treatment. Our aim is stated in precise terms
in Section~\ref{sec:opt:cont:prob}: to study how the two control measures
can reduce the number of infected and persistent latent individuals, taking into account
the cost associated with their implementation
(optimal control problem \eqref{modelTB_controls}--\eqref{mincostfunct}).
Our conclusion is that a successful implementation of these measures
can prevent the increase of new and reinfected TB cases.
In Section~\ref{sec:num:results} we present numerical simulations
for three different strategies and the corresponding
optimal control solutions are discussed. We end the paper
with some conclusions in Section~\ref{sec:conc}.


\section{TB model with controls}
\label{sec:TBmodel}

We study the mathematical model from \cite{Gomes_etall_2007},
where reinfection and post-exposure interventions for tuberculosis are considered.
To this model we add two control functions $u_1(\cdot)$ and $u_2(\cdot)$
and two real positive model constants $\epsilon_1$ and $\epsilon_2$.
The resulting model is given by the following system
of nonlinear ordinary differential equations:
\begin{equation}
\label{modelTB_controls}
\left\{
\begin{aligned}
\dot{S}(t) &= \mu N - \frac{\beta}{N} I(t) S(t) - \mu S(t)\\
\dot{L_1}(t) &= \frac{\beta}{N} I(t)\left( S(t)
+ \sigma L_2(t) + \sigma_R R(t)\right) - \left(\delta + \tau_1 + \mu\right)L_1(t)\\
\dot{I}(t) &= \phi \delta L_1(t) + \omega L_2(t) + \omega_R R(t)
- (\tau_0 + \epsilon_1 u_1(t) + \mu) I(t)\\
\dot{L_2}(t) &= (1 - \phi) \delta L_1(t) - \sigma \frac{\beta}{N} I(t) L_2(t)
- (\omega + \epsilon_2 u_2(t) + \tau_2 + \mu)L_2(t)\\
\dot{R}(t) &= (\tau_0 + \epsilon_1 u_1(t) )I(t) +  \tau_1 L_1(t)
+ (\tau_2 +\epsilon_2 u_2(t)) L_2(t) - \sigma_R \frac{\beta}{N} I(t) R(t)\\
&\quad \quad \quad - (\omega_R + \mu)R(t) \, .
\end{aligned}
\right.
\end{equation}
The population is divided into five categories (\textrm{i.e.},
control system \eqref{modelTB_controls} has five state variables):
\begin{itemize}
\item susceptible ($S$);
\item early latent ($L_1$), \textrm{i.e.}, individuals recently infected
(less than two years) but not infectious;
\item infected ($I$), \textrm{i.e.},
individuals who have active TB and are infectious;
\item persistent latent ($L_2$), \textrm{i.e.}, individuals who were infected and remain latent;
\item and recovered ($R$), \textrm{i.e.}, individuals who were previously infected
and have been treated.
\end{itemize}
The control $u_1$ represents the effort in preventing the failure of treatment
in active TB infectious individuals $I$, \textrm{e.g.}, supervising the patients,
helping them to take the TB medications regularly and to complete the TB treatment.
The control $u_2$ governs the fraction of persistent latent individuals $L_2$
that is put under treatment. The parameters $\epsilon_i \in (0, 1)$, $i=1, 2$,
measure the effectiveness of the controls $u_i$, $i=1, 2$, respectively, \textrm{i.e.},
these parameters measure the efficacy of treatment interventions for active and persistent
latent TB individuals, respectively.
We assume that the total population, $N$, with $N = S(t) + L_1(t) + I(t) + L_2(t) + R(t)$,
is constant in time. In other words, we assume
that the birth and death rates are equal.
By virtue of this assumption, we can reduce the control system
\eqref{modelTB_controls} from five to four state variables.
However, in this paper we prefer to keep the TB model
in form \eqref{modelTB_controls}, using relation
$S(t) + L_1(t) + I(t) + L_2(t) + R(t) = N$ as a test
to check the numerical results.

Following WHO's Global Health Observatory for Angola
in 2009 \cite{who_GHO_dataAngola}, we consider
$\mu = 1/52 \, yr^{-1}$, which corresponds to a life expectancy
at birth of 52 years. This and other values of the parameters
are given in Table~\ref{parameters}.
Moreover, following \cite{Gomes_etall_2007},
we assume that there are no disease-related deaths,
and that at birth all individuals are equally
susceptible and become different
as they experience infection and respective therapy.
The proportion of the population in each category changes
according to the system \eqref{modelTB_controls}.

Two different values for the initial conditions
$S(0)$, $L_1(0)$, $I(0)$, $L_2(0)$ and $R(0)$
are considered (see Table~\ref{statevar_initialvalues}):
\begin{itemize}
\item \textbf{Case 1}, initial values are borrowed from \cite{SLenhart_2002};
\item \textbf{Case 2}, initial values are based on the data from
Luanda's Sanatorium Hospital during September 2010 \cite{url_Min_Saude_Angola}.
\end{itemize}
The values of the rates $\delta$, $\phi$, $\omega$, $\omega_R$, $\sigma$ and
$\tau_0$ are taken from \cite{Gomes_etall_2007} and the references cited therein
(see Table~\ref{parameters} for the values of all the parameters). The parameter $\delta$
denotes the rate at which individuals leave the $L_1$ category; $\phi$ is the proportion
of individuals going to category $I$; $\omega$ is the rate of endogenous reactivation
for persistent latent infections (untreated latent infections); and $\omega_R$ is the rate
of endogenous reactivation for treated individuals
(for those who have undergone a therapeutic intervention).
The parameter $\sigma$ is a factor that measures the reduction in the risk of infection,
as a result of acquired immunity to a previous infection,
for persistent latent individuals, \textrm{i.e.}, this factor affects
the rate of exogenous reinfection of untreated individuals;
while $\sigma_R$ represents the same parameter factor but for treated patients.
In our simulations we consider the case where the susceptibility to reinfection
of treated individuals equals that of latents: $\sigma_R = \sigma$.
The parameter $\tau_0$ is the rate of recovery under treatment of active TB
(assuming an average infectiousness duration of six months).
The parameters $\tau_1$ and $\tau_2$ apply to latent individuals $L_1$ and $L_2$,
respectively, and are the rates at which chemotherapy or a post-exposure vacine is applied.
In \cite{Gomes_etall_2007}, different values for these rates are considered:
the case where no treatment of latent infections occur ($\tau_1 = \tau_2 = 0$);
the case where there is an immediate treatment of persistent latent infections ($\tau_2 \to \infty$);
and the case where there is a moderate treatment of persistent latent infections ($\tau_2 \in [0.1,1]$).
The first and second cases are not interesting from the optimal control point of view.
In our paper we consider, without loss of generality, that the rate of recovery of early latent
individuals under post-exposure interventions is equal to the rate of recovery under treatment of active TB,
$\tau_1 = 2 \, yr^{-1}$, and greater than the rate of recovery of persistent latent individuals
under post-exposure interventions, $\tau_2 = 1 \, yr^{-1}$.
It is assumed that the rate of infection for susceptible individuals is proportional
to the number of infectious individuals and the constant of proportionality is $\beta$,
which is the transmission coefficient.

From the epidemiologistic point of view, an important quantity
is given by the basic reproduction number $R_0$. This is
the expected number of secondary cases produced in a complete susceptible population,
by a typical infected individual during his/her entire period of infectiousness
\cite{Rodrigues_Monteiro_Torres_Zinober_2011}.
In the absence of controls ($u_1 = u_2 \equiv 0$), it is known (see \cite{Gomes_etall_2007})
that the basic reproduction number for system \eqref{modelTB_controls}
is proportional to the transmission coefficient $\beta$:
\begin{equation*}
R_0 = \beta \frac{\delta (\omega + \phi \mu)(\omega_R
+ \mu)}{\mu(\omega_R + \tau_0 + \mu)(\delta + \mu)(\omega + \mu)}.
\end{equation*}
The endemic threshold is given at $R_0 = 1$ and indicates the minimal transmission
potential that sustains endemic disease, \textrm{i.e.}, when $R_0 < 1$
the disease will die out and for $R_0 > 1$ the disease may become endemic.
We can prove that variations of the parameter $\beta$ are associated
with different values for the optimal control $u_1$ but do not interfere
significantly with the optimal control $u_2$. Moreover, as expected,
as $\beta$ decreases the fraction of latent and infected individuals
also decreases. Here we consider $\beta = 100$, which corresponds
to the endemic case ($R_0 = 2.96 > 1$).
The measures of control efficacy
are fixed to the values $\epsilon_i = 1/2$, $i=1, 2$.
Because the risk of developing disease after
infection is much higher in the first five years following infection,
and declines exponentially after that \cite[Sec.~5.5]{Styblo_1991},
we take the total simulation time duration as $T = 5$ years.


\section{The optimal control problem}
\label{sec:opt:cont:prob}

Consider the state system \eqref{modelTB_controls} of ordinary differential
equations in $\mathbb{R}^5$ with the set of admissible control functions given by
\begin{equation*}
\Omega = \{ (u_1(\cdot), u_2(\cdot)) \in (L^{\infty}(0, T))^2
\, | \,  0 \leq u_1 (t), u_2(t) \leq 1 ,  \, \forall \, t \in [0, T] \, \} .
\end{equation*}
The objective functional is given by
\begin{equation}
\label{costfunction}
J(u_1(\cdot), u_2(\cdot)) = \int_0^{T} \left[ I(t) + L_2(t)
+ \tfrac{1}{2} W_1 u_1^2(t) + \tfrac{1}{2} W_2 u_2^2(t) \right] dt \, ,
\end{equation}
where the constants $W_1$ and $W_2$ are a measure
of the relative cost of the interventions
associated with the controls $u_1$ and $u_2$, respectively.
In our numerical simulations we consider the weight $W_1$
associated with the control $u_1$ to be greater than the weight $W_2$ associated
with $u_2$: $W_1 = 500$ and $W_2 = 50$.
The reason for this is that the cost associated with $u_1$ includes
the cost of holding active infected patients $I$ in the hospital
or paying professionals to supervise them, ensuring that they finish
their treatment, which is costly to implement.
In Section~\ref{sec:same} we compare this situation to the one when
the weights take the same values: $W_1 = W_2 = 50$.
We consider the optimal control problem of determining
$\left(S^*(\cdot), L_1^*(\cdot), I^*(\cdot), L_2^*(\cdot), R^*(\cdot)\right)$,
associated with an admissible control pair
$\left(u_1^*(\cdot), u_2^*(\cdot) \right) \in \Omega$ on the time interval $[0, T]$,
satisfying \eqref{modelTB_controls}, the initial conditions
$S(0)$, $L_1(0)$, $I(0)$, $L_2(0)$ and $R(0)$ (see Table~\ref{statevar_initialvalues}),
and minimizing the cost functional \eqref{costfunction}, \textrm{i.e.},
\begin{equation}
\label{mincostfunct}
J(u_1^*(\cdot), u_2^*(\cdot))
= \min_{\Omega} J(u_1(\cdot), u_2(\cdot)) \, .
\end{equation}

\begin{theorem}
\label{theorem_optcont}
The problem \eqref{modelTB_controls}, \eqref{mincostfunct}
with fixed initial conditions $S(0)$, $L_1(0)$, $I(0)$, $L_2(0)$ and $R(0)$,
and fixed final time $T$, admits a unique optimal solution
$\left(S^*(\cdot), L_1^*(\cdot), I^*(\cdot), L_2^*(\cdot), R^*(\cdot)\right)$
associated with an optimal control pair $\left(u_1^*(\cdot), u_2^*(\cdot)\right)$ on $[0, T]$.
Moreover, there exist adjoint functions, $\lambda_1^*(\cdot)$, $\lambda_2^*(\cdot)$,
$\lambda_3^*(\cdot)$, $\lambda_4^*(\cdot)$ and $\lambda_5^*(\cdot)$, such that
\begin{equation}
\label{adjoint_function}
\left\{
\begin{split}
\dot{\lambda^*_1}(t) &= \lambda^*_1(t) \left(\frac{\beta}{N} I^*(t)
+ \mu \right) - \lambda^*_2(t) \frac{\beta}{N} I^*(t)\\
\dot{\lambda^*_2}(t) &= \lambda^*_2(t)(\delta + \tau_1 + \mu)
- \lambda^*_3(t) \phi \delta - \lambda^*_4(t) (1 - \phi) \delta
- \lambda^*_5(t)\tau_1\\
\dot{\lambda^*_3}(t) &= -1 + \lambda^*_1(t) \frac{\beta}{N} S^*(t)
- \lambda^*_2(t) \frac{\beta}{N}(S^*(t) + \sigma L_2^*(t) + \sigma_R R^*(t))\\
+ \lambda^*_3&(t) (\tau_0 +\epsilon_1 u_1^*(t) + \mu)
+\lambda^*_4(t)\sigma \frac{\beta}{N} L_2^*(t) - \lambda^*_5(t)\left(\tau_0
+ \epsilon_1 u^*_1(t) - \sigma_R \frac{\beta}{N} R^*(t) \right)\\
\dot{\lambda^*_4}(t) &= - 1 - \lambda^*_2(t) \frac{\beta}{N}I^*(t) \sigma
- \lambda^*_3(t) \omega + \lambda^*_4(t)\left(\sigma \frac{\beta}{N} I^*(t)
+ \omega + \epsilon_2 u^*_2(t) + \tau_2 + \mu\right)\\
- \lambda^*_5&(t)\left( \tau_2 + \epsilon_2 u^*_2(t) \right)\\
\dot{\lambda^*_5}(t) &= -\lambda^*_2(t) \sigma_R \frac{\beta}{N}I^*(t)
- \lambda^*_3(t) \omega_R + \lambda^*_5(t)\left(\sigma_R
\frac{\beta}{N} I^*(t)  +\omega_R + \mu\right)   \, ,
\end{split}
\right.
\end{equation}
with transversality conditions
$\lambda^*_i(T) = 0$, $i=1, \ldots, 5$.
Furthermore,
\begin{equation}
\label{optcontrols}
\begin{split}
u_1^*(t) &= \min \left\{ \max \left\{0, \frac{\epsilon_1 I^*
(\lambda^*_3 - \lambda^*_5)}{W_1}\right\}, 1 \right\} \, ,\\
u_2^*(t) &= \min \left\{ \max \left\{0, \frac{\epsilon_2
L^*_2(\lambda^*_4 - \lambda^*_5)}{W_2}\right\}, 1 \right\}  \, .
\end{split}
\end{equation}
\end{theorem}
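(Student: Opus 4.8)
The plan is to split the proof into three parts: existence of an optimal pair, characterization of the optimal controls via Pontryagin's minimum principle, and finally uniqueness.

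For \emph{existence}, I would invoke a standard result from optimal control theory (e.g.\ the Fleming--Rishel or Cesari existence theorems). The verification is routine: the admissible set $\Omega$ is nonempty, convex and closed; the right-hand side of \eqref{modelTB_controls} is affine (hence linearly bounded) in the controls $(u_1, u_2)$; and the a priori bound $0 \le S, L_1, I, L_2, R \le N$, which follows from the conservation law $S + L_1 + I + L_2 + R = N$ together with nonnegativity of the states, guarantees that trajectories exist on all of $[0, T]$. The integrand of \eqref{costfunction} is bounded below and convex in $(u_1, u_2)$; moreover it satisfies the coercivity estimate $I + L_2 + \tfrac{1}{2} W_1 u_1^2 + \tfrac{1}{2} W_2 u_2^2 \ge c_1 (u_1^2 + u_2^2) - c_2$ with $c_1 = \tfrac{1}{2}\min\{W_1, W_2\} > 0$. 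These hypotheses yield an optimal pair $(u_1^*, u_2^*)$ with associated state $(S^*, L_1^*, I^*, L_2^*, R^*)$.

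For the \emph{characterization}, I would apply Pontryagin's minimum principle. Forming the Hamiltonian $H = I + L_2 + \tfrac{1}{2} W_1 u_1^2 + \tfrac{1}{2} W_2 u_2^2 + \sum_{j=1}^{5} \lambda_j f_j$, where $f_j$ denotes the $j$th right-hand side of \eqref{modelTB_controls}, the adjoint system \eqref{adjoint_function} is obtained by computing $\dot{\lambda}_j^* = -\partial H / \partial x_j$ evaluated along the optimal trajectory, and the transversality conditions $\lambda_i^*(T) = 0$ follow because the terminal states are free and the terminal cost is absent. Minimizing $H$ over $(u_1, u_2)$ pointwise, the interior stationarity conditions $\partial H / \partial u_1 = W_1 u_1 - \epsilon_1 I^* (\lambda_3^* - \lambda_5^*) = 0$ and $\partial H / \partial u_2 = W_2 u_2 - \epsilon_2 L_2^* (\lambda_4^* - \lambda_5^*) = 0$ give the unconstrained minimizers; projecting these onto the admissible interval $[0, 1]$ produces the $\min$--$\max$ expressions \eqref{optcontrols}. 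Note that the Hessian of $H$ in the controls is the positive-definite matrix $\mathrm{diag}(W_1, W_2)$, so the pointwise minimizer is \emph{strictly} convex and hence unique for given state and adjoint values.

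The \emph{uniqueness} of the optimal solution is the delicate step. Substituting \eqref{optcontrols} into \eqref{modelTB_controls} and \eqref{adjoint_function} yields the optimality system, a two-point boundary value problem in the ten variables $(S^*, L_1^*, I^*, L_2^*, R^*, \lambda_1^*, \dots, \lambda_5^*)$. I would first establish uniform bounds: the states are bounded by $N$ as above, and the adjoints are bounded because their equations are linear in $\lambda^*$ with coefficients that are bounded on $[0, T]$, so a Gronwall estimate on $[0, T]$ (integrating backward from the zero terminal data) controls them. With all variables bounded, the right-hand side of the optimality system is Lipschitz, and the control characterization \eqref{optcontrols} is itself Lipschitz in the adjoint and state variables. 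Assuming two solutions of the optimality system, subtracting the equations and applying Gronwall-type inequalities to the norm of the difference forces the two solutions to coincide; the strict convexity noted above then transfers uniqueness of the state--adjoint pair to uniqueness of $(u_1^*, u_2^*)$. The main obstacle is exactly this last estimate: the standard Gronwall argument yields uniqueness only when the final time $T$ is sufficiently small, and closing the argument for the prescribed horizon requires tracking the Lipschitz constants carefully against the bounds on the states and adjoints.
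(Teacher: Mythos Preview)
Your proposal is correct and follows essentially the same three-step route as the paper: existence via the Cesari/Fleming--Rishel conditions (convexity of the integrand in the controls, Lipschitz/linear structure of the state system), characterization via the Pontryagin minimum principle with the Hamiltonian you wrote down, and uniqueness of the optimality system via boundedness plus a Lipschitz/Gronwall argument. The paper's own proof is terser but invokes exactly these ingredients, and---like you---it only secures uniqueness for sufficiently small $T$, so the ``obstacle'' you flag at the end is not a defect of your argument but a genuine limitation that the paper shares.
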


\begin{proof}
The existence of optimal controls $\left(u_1^*(\cdot), u_2^*(\cdot)\right)$
and associated optimal solution $\left(S^*, L_1^*, I^*, L_2^*, R^*\right)$
comes from the convexity of the integrand of the cost function \eqref{costfunction}
with respect to the controls $(u_1, u_2)$ and the Lipschitz property
of the state system with respect to state variables $\left(S, L_1, I, L_2, R\right)$
(see, \textrm{e.g.}, \cite{Cesari_1983,Fleming_Rishel_1975}
for existence results of optimal solutions).
According to the Pontryagin Maximum Principle \cite{Pontryagin_et_all_1962},
if $(u_1^*(\cdot), u_2^*(\cdot)) \in \Omega$ is optimal for the problem
\eqref{modelTB_controls}, \eqref{mincostfunct} with the initial conditions
given in Table~\ref{statevar_initialvalues} and fixed final time $T$,
then there exists a nontrivial absolutely continuous mapping $\lambda : [0, T] \to \mathbb{R}^5$,
$\lambda(t) = \left(\lambda_1(t), \lambda_2(t), \lambda_3(t), \lambda_4(t), \lambda_5(t)\right)$,
called the \emph{adjoint vector}, such that
\begin{equation*}
\left\{
\begin{split}
\dot{S} &= \frac{\partial H}{\partial \lambda_1} \, ,\\
\dot{L}_1 &= \frac{\partial H}{\partial \lambda_2} \, ,\\
\dot{I} &= \frac{\partial H}{\partial \lambda_3} \, ,\\
\dot{L}_2 &= \frac{\partial H}{\partial \lambda_4} \, ,\\
\dot{R} &= \frac{\partial H}{\partial \lambda_5}
\end{split}
\right.
\end{equation*}
and
\begin{equation}
\label{adjsystemPMP}
\begin{cases}
\dot{\lambda}_1 = -\frac{\partial H}{\partial S} \, ,\\
\dot{\lambda}_2 = -\frac{\partial H}{\partial L_1} \, ,\\
\dot{\lambda}_3 = -\frac{\partial H}{\partial I} \, ,\\
\dot{\lambda}_4 = -\frac{\partial H}{\partial L_2} \, ,\\
\dot{\lambda}_5 = -\frac{\partial H}{\partial R} \, ,
\end{cases}
\end{equation}
where the Hamiltonian $H$ is defined by
\begin{equation*}
\begin{split}
H&= H(S, L_1, I, L_2, R, \lambda, u_1, u_2) \\
&=I + L_2  + \frac{W_1}{2}u_1^2 + \frac{W_2}{2}u_2^2 \\
&\quad + \lambda_1 \left(\mu N - \frac{\beta}{N} I S - \mu S \right)\\
&\quad  + \lambda_2 \left( \frac{\beta}{N} I\left( S
+ \sigma L_2 + \sigma_R R\right) - (\delta + \tau_1 + \mu)L_1 \right)\\
&\quad  + \lambda_3 \left(\phi \delta L_1 + \omega L_2
+ \omega_R R - (\tau_0 + \epsilon_1 u_1 + \mu) I \right)\\
&\quad  + \lambda_4 \left((1 - \phi) \delta L_1
- \sigma \frac{\beta}{N} I L_2
- (\omega + \epsilon_2 u_2 + \tau_2 + \mu)L_2 \right)\\
&\quad   + \lambda_5 \left((\tau_0 + \epsilon_1 u_1 )I
+ \tau_1 L_1 + \left(\tau_2 +\epsilon_2 u_2\right) L_2
- \sigma_R \frac{\beta}{N} I R - (\omega_R + \mu)R \right)
\end{split}
\end{equation*}
and the minimization condition
\begin{multline}
\label{maxcondPMP}
H\left(S^*(t), L_1^*(t), I^*(t), L_2^*(t), R^*(t), \lambda^*(t), u_1^*(t), u_2^*(t)\right)\\
= \min_{0 \leq u_1, u_2 \leq 1}
H\left(S^*(t), L_1^*(t), I^*(t), L_2^*(t), R^*(t), \lambda^*(t), u_1, u_2\right)
\end{multline}
holds almost everywhere on $[0, T]$. Moreover, the transversality conditions
$\lambda_i(T) = 0$, $i =1,\ldots, 5$, hold.
System~\eqref{adjoint_function} is derived from \eqref{adjsystemPMP},
and the optimal controls \eqref{optcontrols} come from
the minimization condition \eqref{maxcondPMP}.
For small final time $T$, the optimal control pair given by \eqref{optcontrols}
is unique due to the boundedness of the state and adjoint functions
and the Lipschitz property of systems \eqref{modelTB_controls}
and \eqref{adjoint_function} (see, \textrm{e.g.}, \cite{SLenhart_2002}).
\end{proof}


\section{Numerical results and discussion}
\label{sec:num:results}

In this section we present different optimal control strategies
for the TB model \eqref{modelTB_controls}
under the parameter values given in Table~\ref{parameters}.
Three control strategies are explored:
\begin{itemize}
\item \textbf{Strategy 1}, which implements measures that prevent the failure
of treatment in active TB individuals $I$ (control $u_1$ alone);

\item \textbf{Strategy 2}, which considers treatment
of persistent latent individuals $L_2$ with anti-TB drugs (control $u_2$ alone);

\item \textbf{Strategy 3}, which implements measures
for preventing treatment failure in active TB
individuals and variation of the fraction of persistent latent individuals
that are put under treatment (controls $u_1$ and $u_2$).
\end{itemize}
\begin{table}
\centering
\begin{tabular}{|l|l|l|}
\hline
{\small{Symbol}} & {\small{Description}}  & {\small{Value}} \\
\hline
{\small{$\beta$}} & {\small{Transmission coefficient}}  & {\small{$100$}}\\
{\small{$\mu$}} & {\small{Death and birth rate}}  & {\small{$1/52 \, yr^{-1}$}}\\
{\small{$\delta$}} & {\small{Rate at which individuals leave $L_1$}}  & {\small{$12 \, yr^{-1}$}}\\
{\small{$\phi$}} & {\small{Proportion of individuals going to $I$}}  & {\small{$0.05$}}\\
{\small{$\omega$}} & {\small{Endogenous reactivation rate for persistent latent infections}}  & {\small{$0.0002 \, yr^{-1}$}}\\
{\small{$\omega_R$}} & {\small{Endogenous reactivation rate for treated individuals}}   &{\small{$0.00002 \, yr^{-1}$}}\\
{\small{$\sigma$}} & {\small{Factor reducing the risk of infection as a result of acquired}}  & \\
& {\small{immunity to a previous infection for $L_2$}} & {\small{$0.25$}} \\
{\small{$\sigma_R$}} & {\small{Rate of exogenous reinfection of treated patients}}  & {\small{0.25}} \\
{\small{$\tau_0$}} & {\small{Rate of recovery under treatment of active TB}}  &  {\small{$2 \, yr^{-1}$}}\\
{\small{$\tau_1$}} & {\small{Rate of recovery under treatment of latent individuals $L_1$}}  &  {\small{$2 \, yr^{-1}$}}\\
{\small{$\tau_2$}} & {\small{Rate of recovery under treatment of latent individuals $L_2$}}  &  {\small{$1 \, yr^{-1}$}}\\
{\small{$N$}} & {\small{Total population}} & {\small{$30,000$}} \\
{\small{$T$}} & {\small{Total simulation duration}} & {\small{$5$ $yr$}} \\
{\small{$\epsilon_1$}} & {\small{Efficacy of treatment of active TB $I$}} & {\small{$0.5$}} \\
{\small{$\epsilon_2$}} & {\small{Efficacy of treatment of latent TB $L_2$}} & {\small{$0.5$}} \\
{\small{$W_1$}} & {\small{Weight constant of control $u_1$}} & {\small{$500$}}\\
{\small{$W_2$}} & {\small{Weight constant of control $u_2$}} & {\small{$50$}}\\ \hline
\end{tabular}
\caption{Parameter values for the optimal control problem \eqref{modelTB_controls}--\eqref{mincostfunct}}
\label{parameters}
\end{table}

Different approaches were used to obtain and confirm the numerical results.
One approach consisted in using IPOPT \cite{IPOPT}
and the algebraic modeling language AMPL \cite{AMPL}.
A second approach was to use the PROPT Matlab Optimal Control Software \cite{PROPT}.
The results coincide with the ones obtained by an iterative method
that consists in solving the system of ten ODEs given by
\eqref{modelTB_controls} and \eqref{adjoint_function}.
For that, first one solves system \eqref{modelTB_controls}
with a guess for the controls over the time interval
$[0, T]$ using a forward fourth-order Runge--Kutta scheme and the transversality
conditions $\lambda_i(T) = 0$, $i=1, \ldots, 5$. Then,
system \eqref{adjoint_function} is solved by
a backward fourth-order Runge--Kutta scheme using the current
iteration solution of \eqref{modelTB_controls}.
The controls are updated by using a convex combination of the previous controls
and the values from \eqref{optcontrols}. The iteration is stopped if the values of unknowns
at the previous iteration are very close to the ones at the present iteration.
For more details see, \emph{e.g.}, \cite{SLenhart_2002}.

To begin, we study the influence of the initial values of the state
variables, as given in Table~\ref{statevar_initialvalues},
on the optimal solution associated with Strategy~3.
\begin{table}
\centering
\begin{tabular}{|l|l|l|l|l|}
\hline
{\small{States}}  & {\small{Case 1}} & {\small{Case 1}}  & {\small{Case 2}} & {\small{Case 2}}\\
{\small{(initial value)}} & {\small{(fraction)}}  & {\small{($\%$)}}  & {\small{(fraction)}}  & {\small{($\%$)}} \\
\hline
{\small{$S(0)$}} &  {\small{$(76/120)N$}} & {\small{$\simeq 63.3\%$}} & {\small{$(16/50)N$}} & {\small{$32\%$}} \\
\hline
{\small{$L_1(0)$}} & {\small{$(36/120)N$}} & {\small{$30\% $}} & {\small{$(28/50)N$}}  & {\small{$56\%$}} \\
\hline
{\small{$I(0)$}} & {\small{$(5/120)N$}} & {\small{$\simeq 4.2\% $}} & {\small{$(3/50)N$}} & {\small{$6\%$}}\\
\hline
{\small{$L_2(0)$}} & {\small{$(2/120)N$}} & {\small{$\simeq 1.7\% $}} & {\small{$(2/50)N$}} & {\small{$4\%$}} \\
\hline
{\small{$R(0)$}} & {\small{$(1/120)N$}} & {\small{$\simeq 0.8\% $}} & {\small{$(1/50)N$}} & {\small{$2\%$}} \\
\hline
\end{tabular}
\caption{Initial values for the state variables of \eqref{modelTB_controls}}
\label{statevar_initialvalues}
\end{table}


\subsection{Different initial values (Case 1 and Case 2) for the state variables}

Our aim is to compare the optimal solution associated with Strategy~3
(when both controls $u_1$ and $u_2$ are considered)
under different initial values for the state variables:
in the first case we consider the initial values of \cite{SLenhart_2002}
(Case~1) and in the second case we consider the initial values given in
an official communication from Luanda Sanatorium Hospital
\cite{url_Min_Saude_Angola} (Case~2).
\begin{figure}
  \begin{center}
  \subfloat[\footnotesize{Optimal Control $u_1^*$}]{\label{fig:comp:dadosiniciais:u1}
  \includegraphics[width=0.45\textwidth]{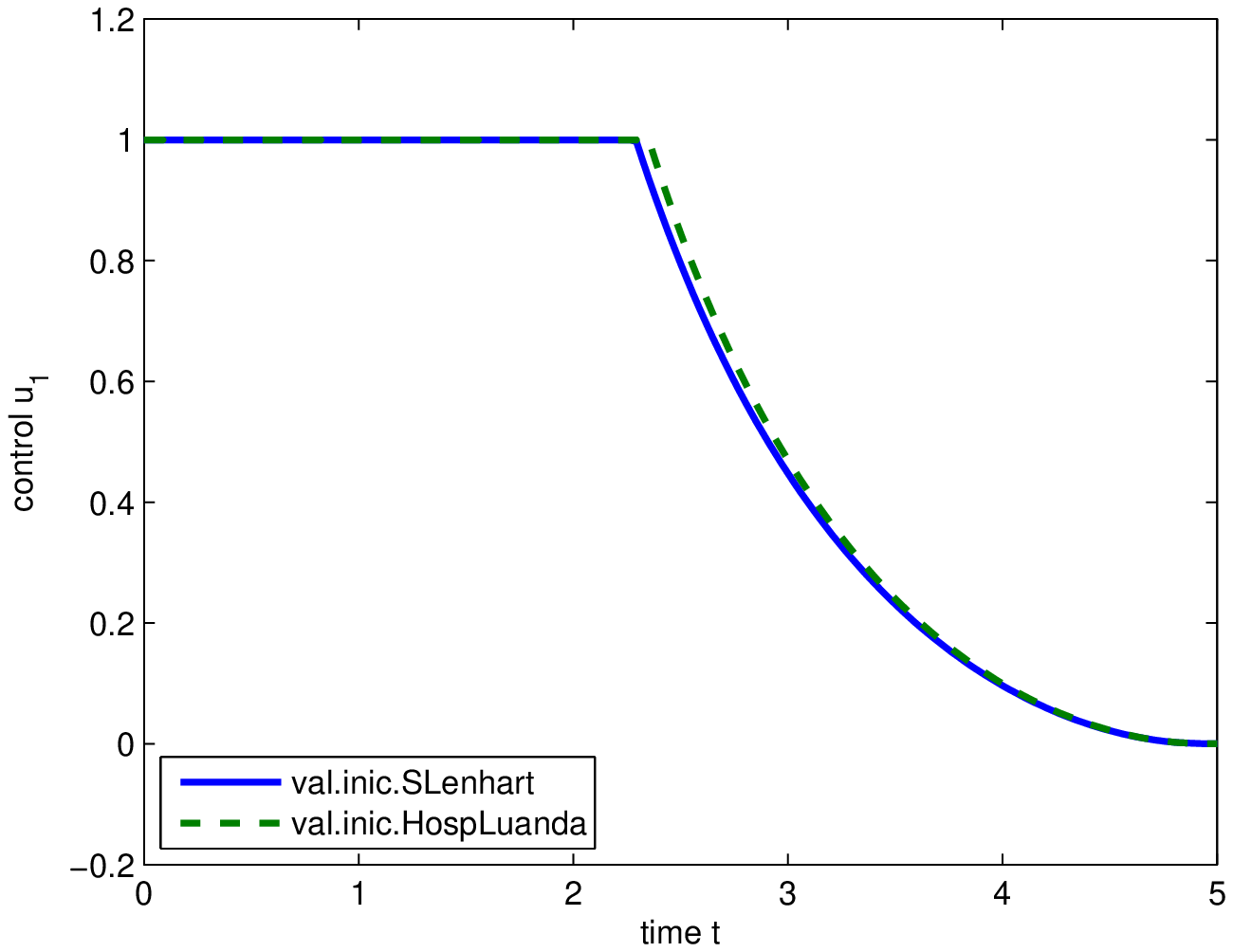}}
  \subfloat[\footnotesize{Optimal Control $u_2^*$}]{\label{fig:comp:dadosiniciais:u2}
  \includegraphics[width=0.45\textwidth]{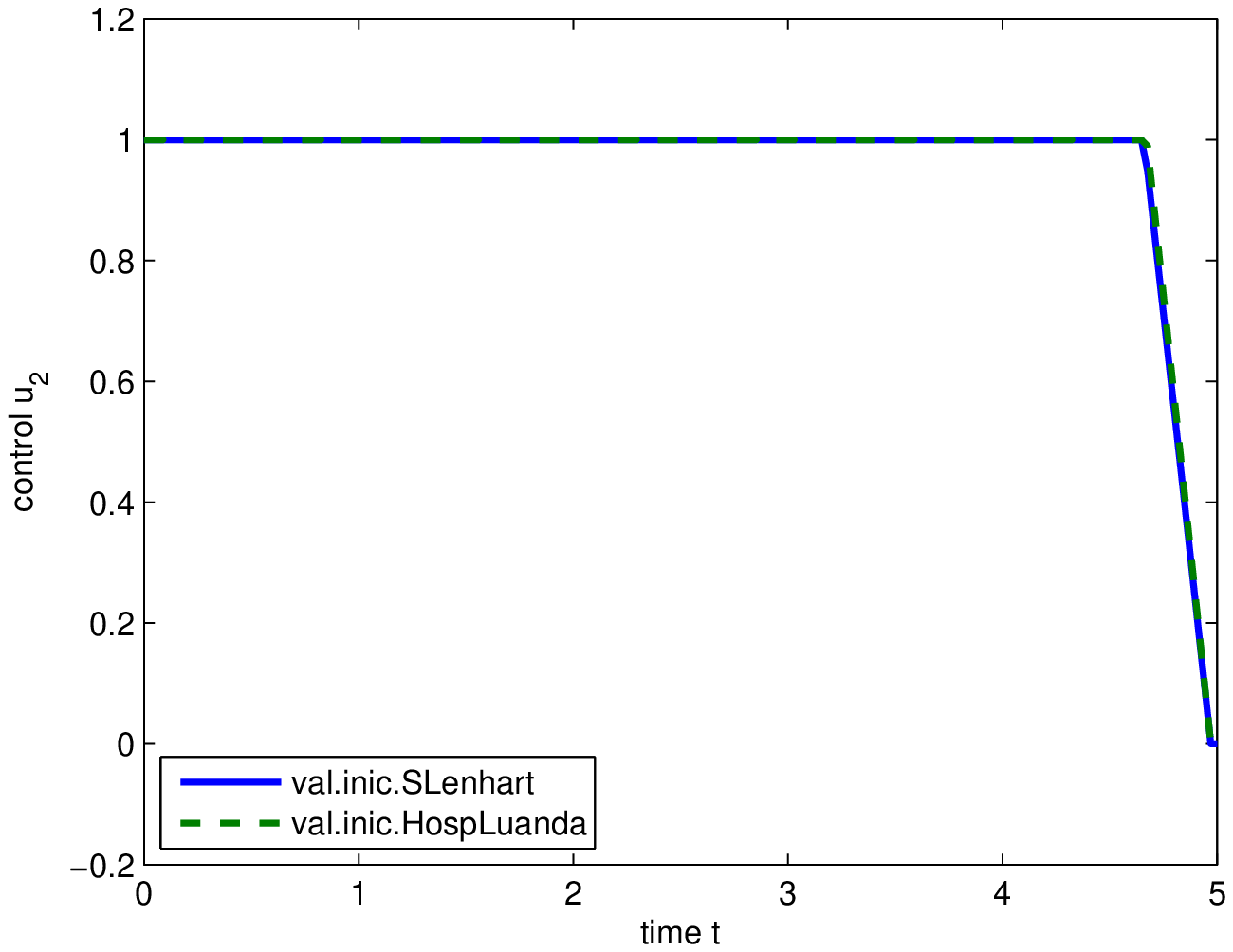}}
  \caption{Optimal control functions in Case 1 (solid line) and Case 2 (dashed line),
  in a time scale in units of years, for Strategy~3}
  \label{fig:variar:dados:iniciais:controlos}
  \end{center}
\end{figure}
In Figure~\ref{fig:variar:dados:iniciais:controlos} we observe that
the differences between the initial values considered in Cases~1 and 2
(Table~\ref{statevar_initialvalues}) do not
influence the optimal controls significantly.
\begin{figure}
\begin{center}
\subfloat[\footnotesize{$(I^*+L_2^*)/N$}]{
  \includegraphics[width=0.45\textwidth]{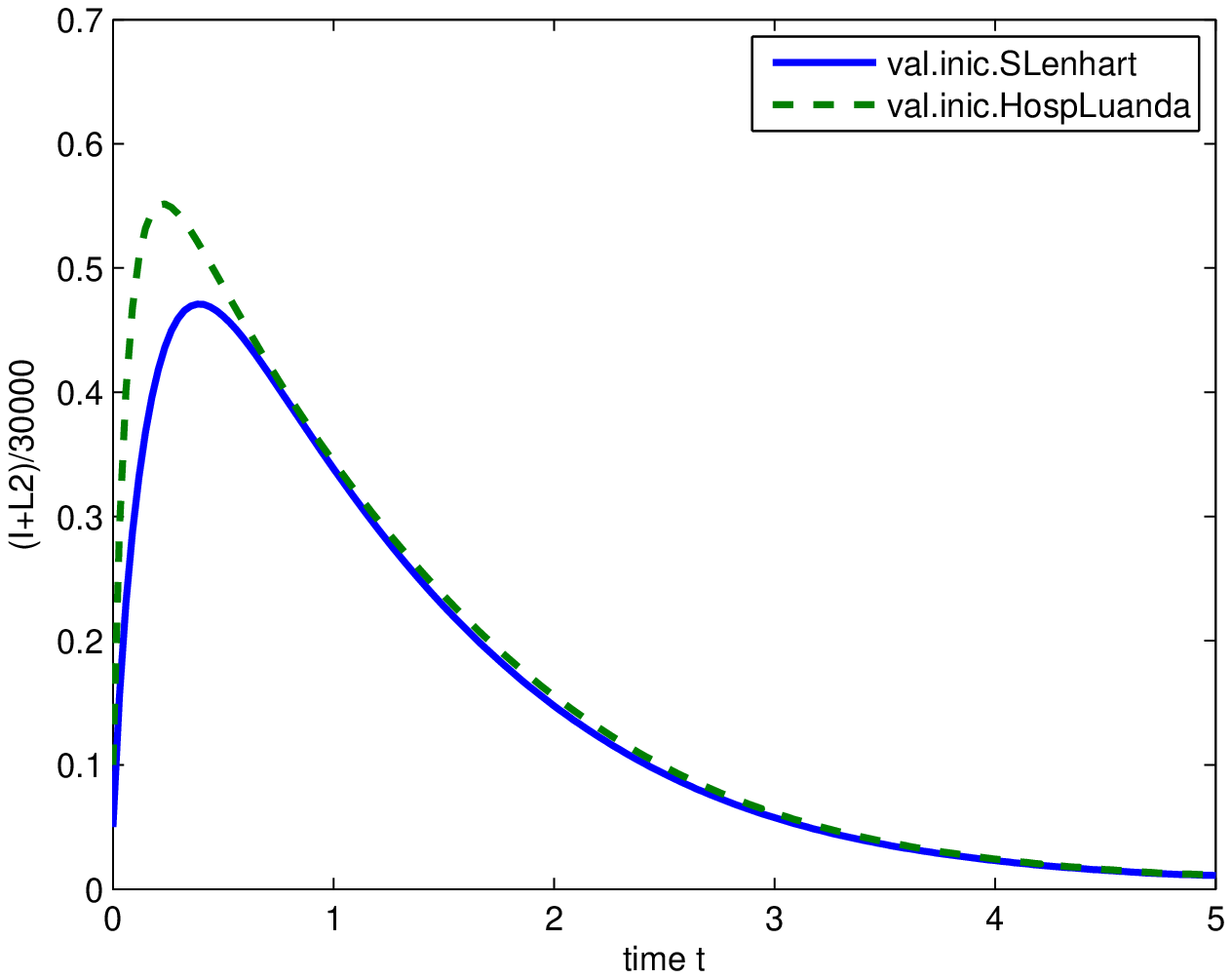}}
\subfloat[\footnotesize{$S^*/N$}]{
  \includegraphics[width=0.45\textwidth]{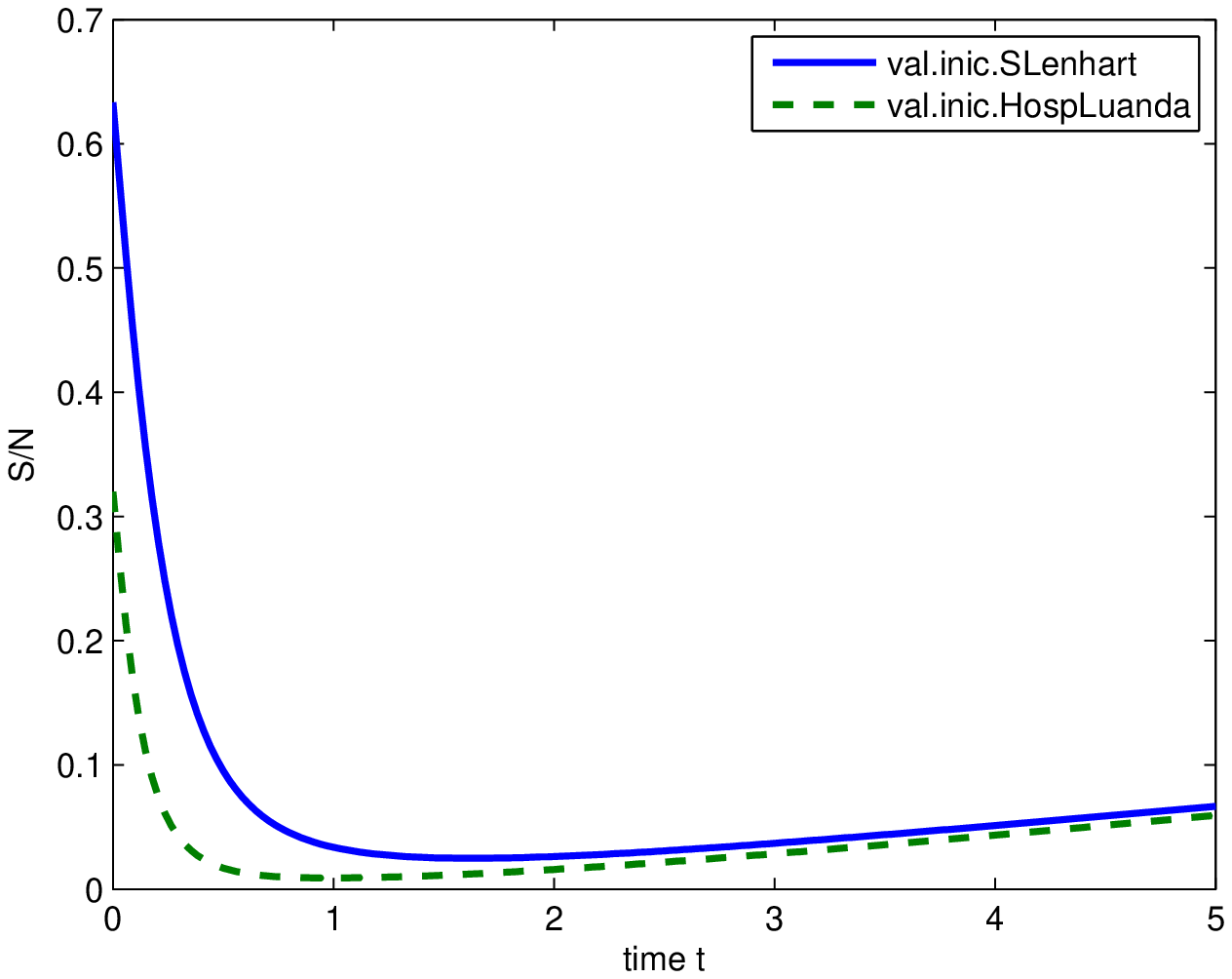}}
\newline
\subfloat[\footnotesize{$L_1^*/N$}]{\label{fig:comp:dadosiniciais:L1}
  \includegraphics[width=0.45\textwidth]{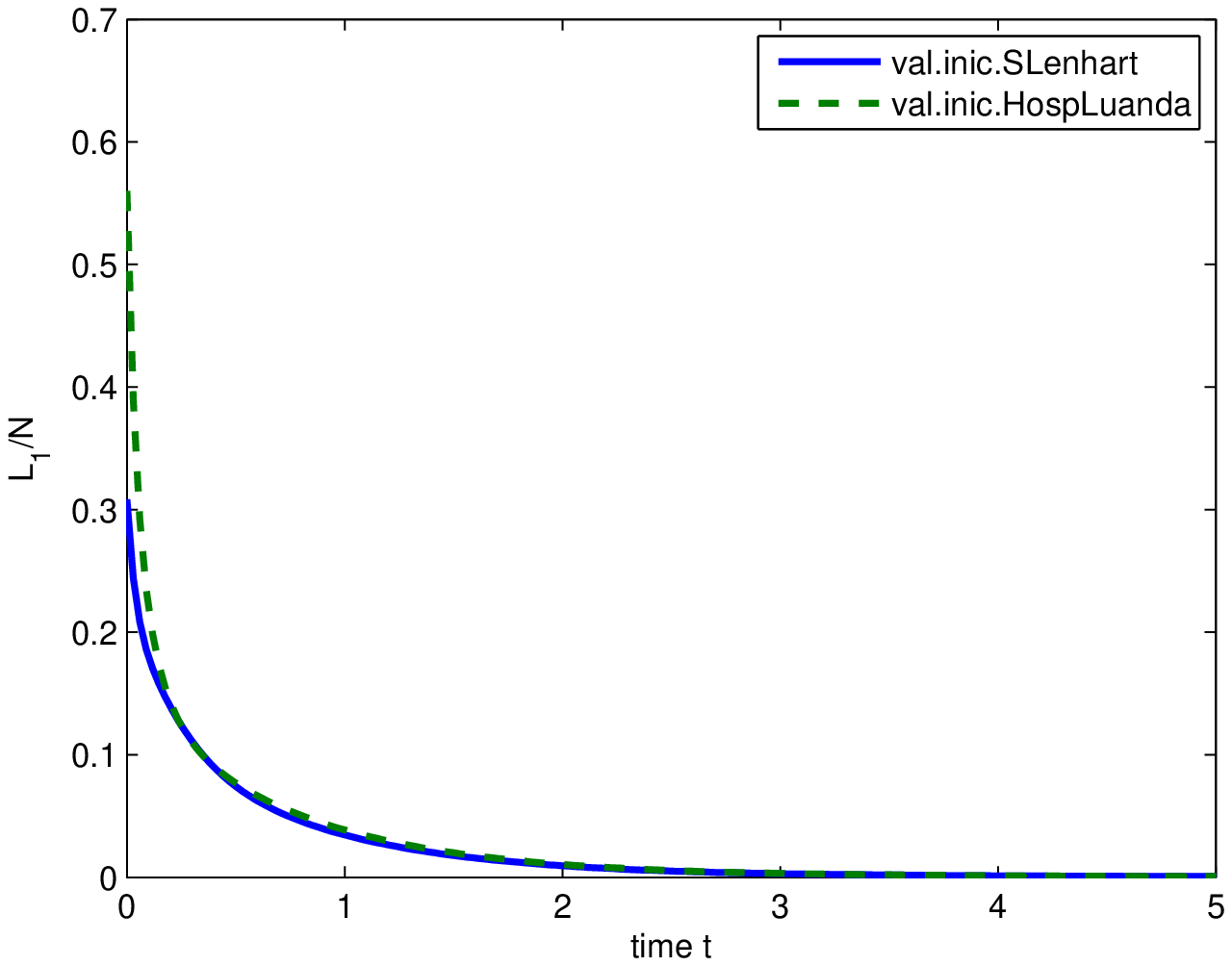}}
\subfloat[\footnotesize{$R^*/N$}]{\label{fig:comp:dadosiniciais:R}
  \includegraphics[width=0.45\textwidth]{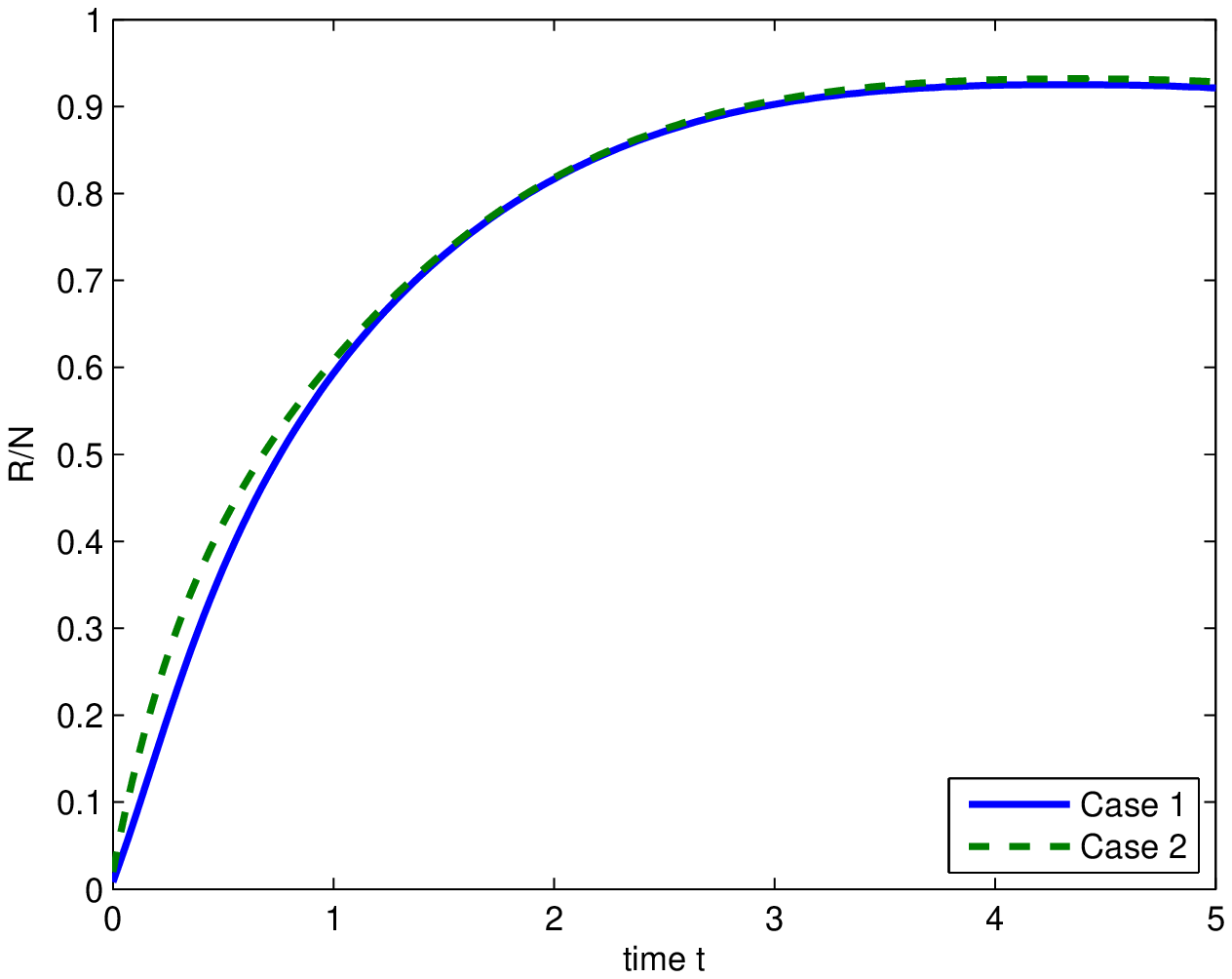}}
\caption{Optimal states in Case 1 (solid line) and Case 2 (dashed line),
in a time scale in units of years, for Strategy~3}
\label{fig:variar:dados:iniciais2}
\end{center}
\end{figure}
In Case~1, the initial percentage of susceptible individuals $S(0)$
is approximately twice the respective percentage in Case~2.
However, with the optimal controls $\left(u_1^*, u_2^*\right)$
illustrated in Figure~\ref{fig:variar:dados:iniciais:controlos}, after
two years this difference is substantially reduced, as we can observe
in Figure~\ref{fig:variar:dados:iniciais2}. With respect to the initial
values of early latent individuals $L_1(0)$, the percentage in Case~1
is 30\% and in Case~2 it is 56\%. Here, in both cases,
the percentage of early latent individuals is reduced
to approximately 10\% after approximately
140 days. We observe that even if in Case~2
the initial fraction of early latent individuals is much higher than in Case~1,
the optimal controls ensure a big reduction in the fraction of
early latent individuals and, after no more than 5 months, the difference
between the two cases no longer exists. Given these results,
from now on we consider, without loss of generality,
Case~1 for the initial values of the state variables.
Our numerical simulations were checked
using the relation $S(t) + L_1(t) + I(t) + L_2(t) + R(t) = N$.
Figure~\ref{fig:total:pop:N} shows that $S(t) + L_1(t) + I(t) + L_2(t) + R(t)$
is constant along time for Strategy~3, as expected.
\begin{figure}[!htb]
\begin{center}
\includegraphics[width=0.45\textwidth]{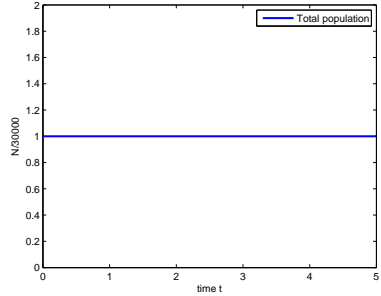}
\caption{Plot of $\left(S(t) + L_1(t) + I(t) + L_2(t) + R(t)\right)/N$
versus $t$ in years for Strategy~3}
\label{fig:total:pop:N}
\end{center}
\end{figure}


\subsection{Strategy~1 versus Strategy~3}
\label{sub:sec:st1}

If we choose to apply only the control $u_1$ (Strategy~1)
and compare this situation to the case where both controls $u_1$ and $u_2$
are applied (Strategy~3), we observe that the optimal control $u_1^*$ stays
at the upper bound for almost the same duration in both situations (see Figure~\ref{fig:so:u1}).
\begin{figure}
\begin{center}
\subfloat[\footnotesize{Control $u_1^*$}]{\label{fig:u1:so:u1}
  \includegraphics[width=0.45\textwidth]{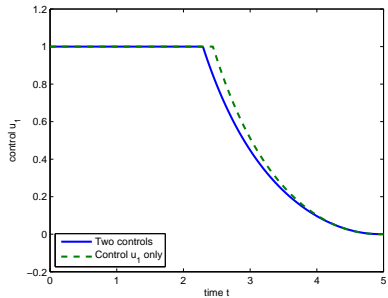}}
\caption{The optimal control $u_1^*$ for Strategy~1 (dashed line)
and Strategy~3 (solid line) in a time scale in units of years}
\label{fig:so:u1}
\end{center}
\end{figure}
When only the control $u_1$ is considered, the fraction of persistent latent individuals $L_2$
is greater than the respective fraction associated with Strategy~3, for the entire five years
(see Figure~\ref{fig:so:u1:statevar} A and E). This implies a higher value of the cost
functional \eqref{costfunction} associated with Strategy~1
when compared to the cost associated with Strategy~3.
When one compares the change in the fraction of susceptible ($S$), early latent ($L_1$)
and infected ($I$) individuals, no difference is observed  between Strategies~1 and 3
(see Figure~\ref{fig:so:u1:statevar} B, C and D).
\begin{figure}
\begin{center}
\subfloat[\footnotesize{$(I^* + L_2^*)/N$}]{\label{fig:so:u1:I:L2}
  \includegraphics[width=0.45\textwidth]{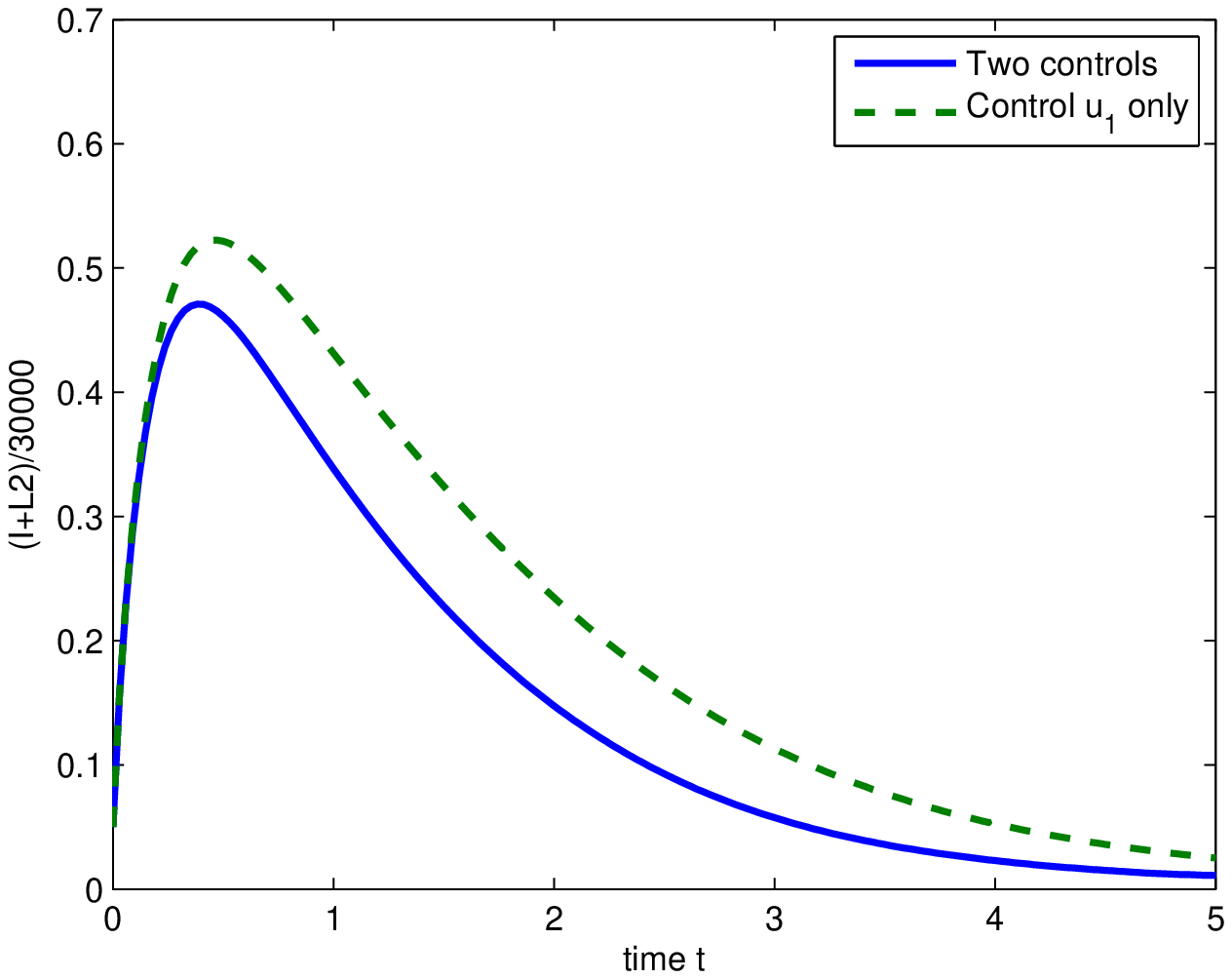}}
\subfloat[\footnotesize{$S^*/N$}]{\label{fig:so:u1:S}
  \includegraphics[width=0.45\textwidth]{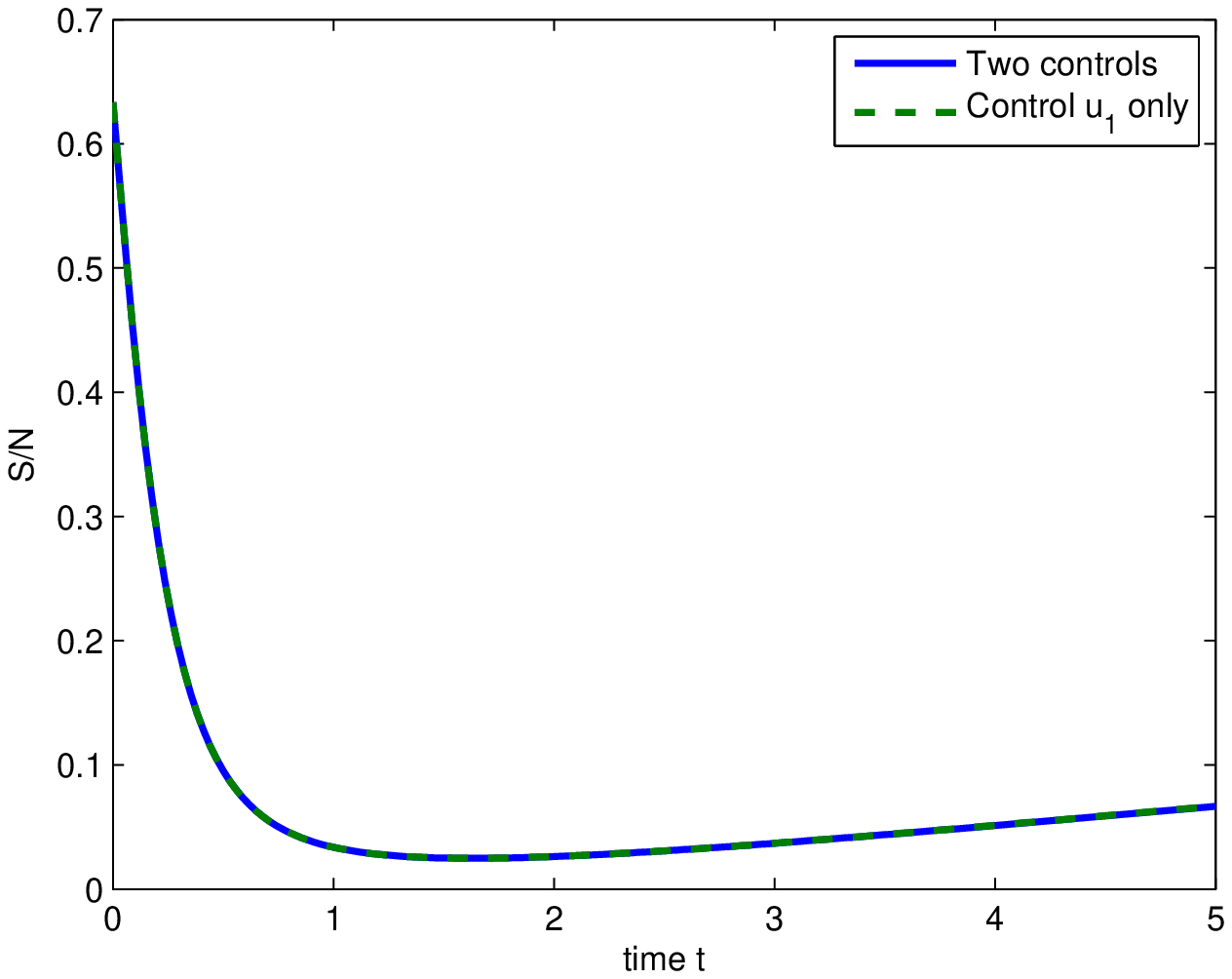}}
\newline
\subfloat[\footnotesize{$L_1^*/N$}]{\label{fig:so:u1:L1}
  \includegraphics[width=0.45\textwidth]{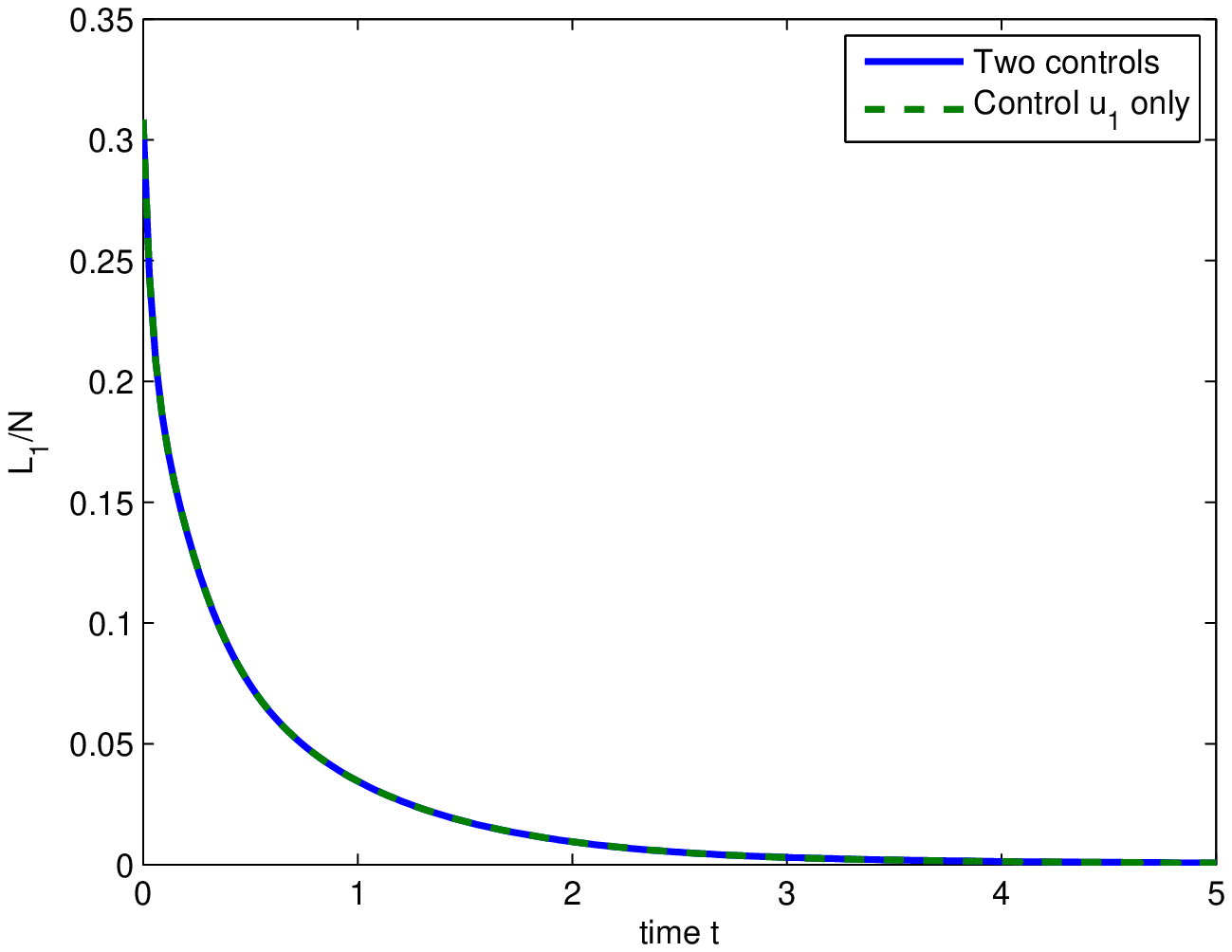}}
\subfloat[\footnotesize{$I^*/N$}]{\label{fig:so:u1:I}
  \includegraphics[width=0.45\textwidth]{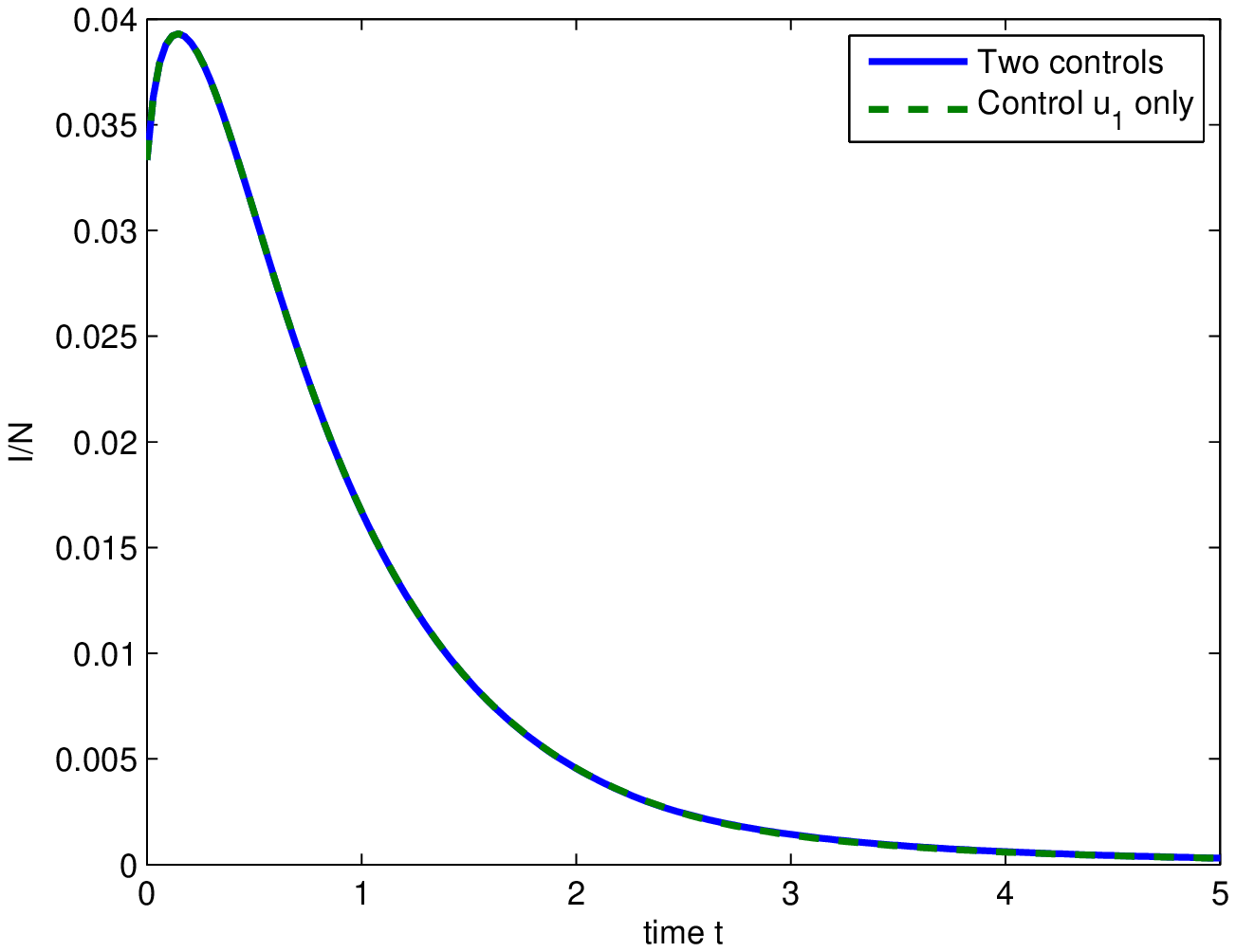}}
\newline
\subfloat[\footnotesize{$L_2^*/N$}]{\label{fig:so:u1:L2}
  \includegraphics[width=0.45\textwidth]{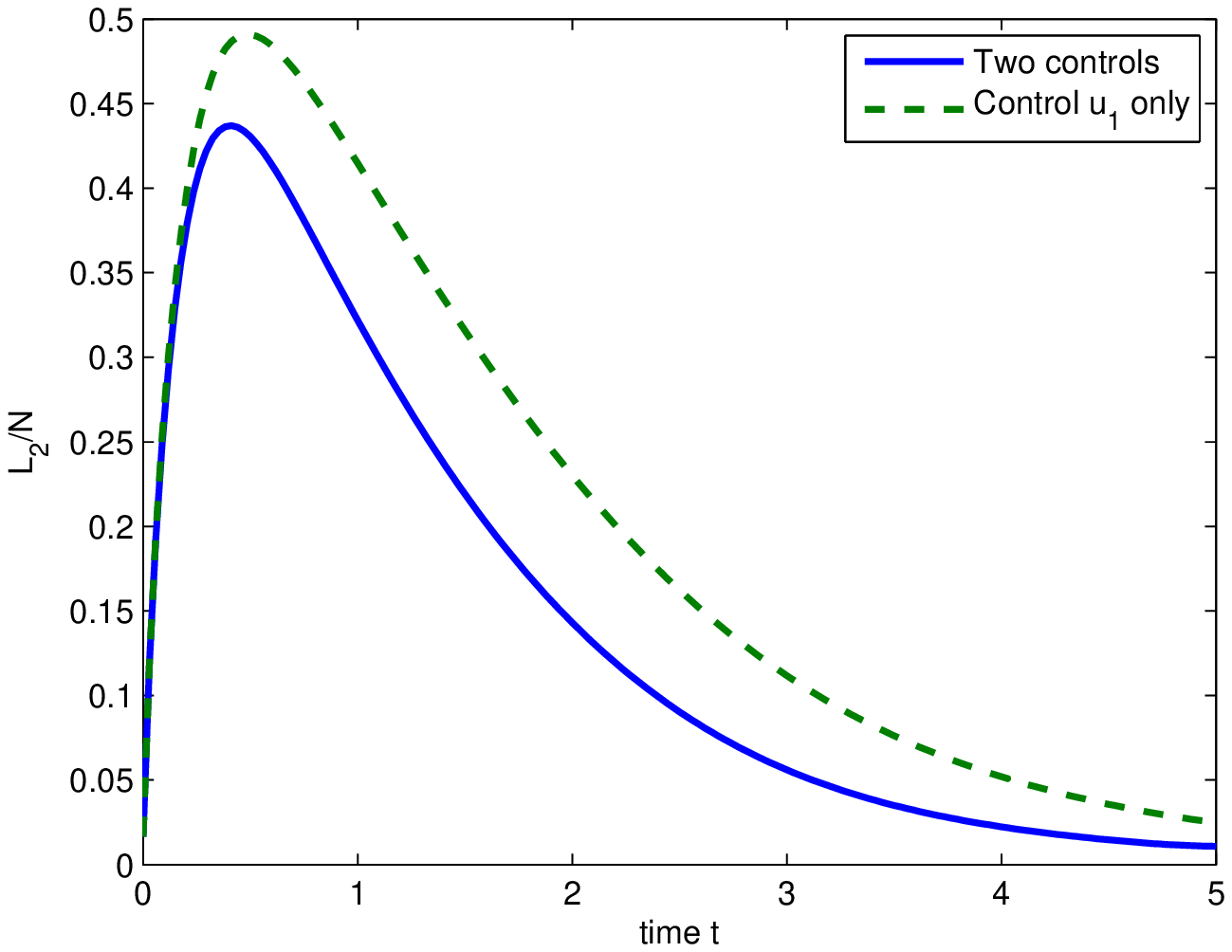}}
\subfloat[\footnotesize{$R^*/N$}]{\label{fig:so:u1:R}
  \includegraphics[width=0.45\textwidth]{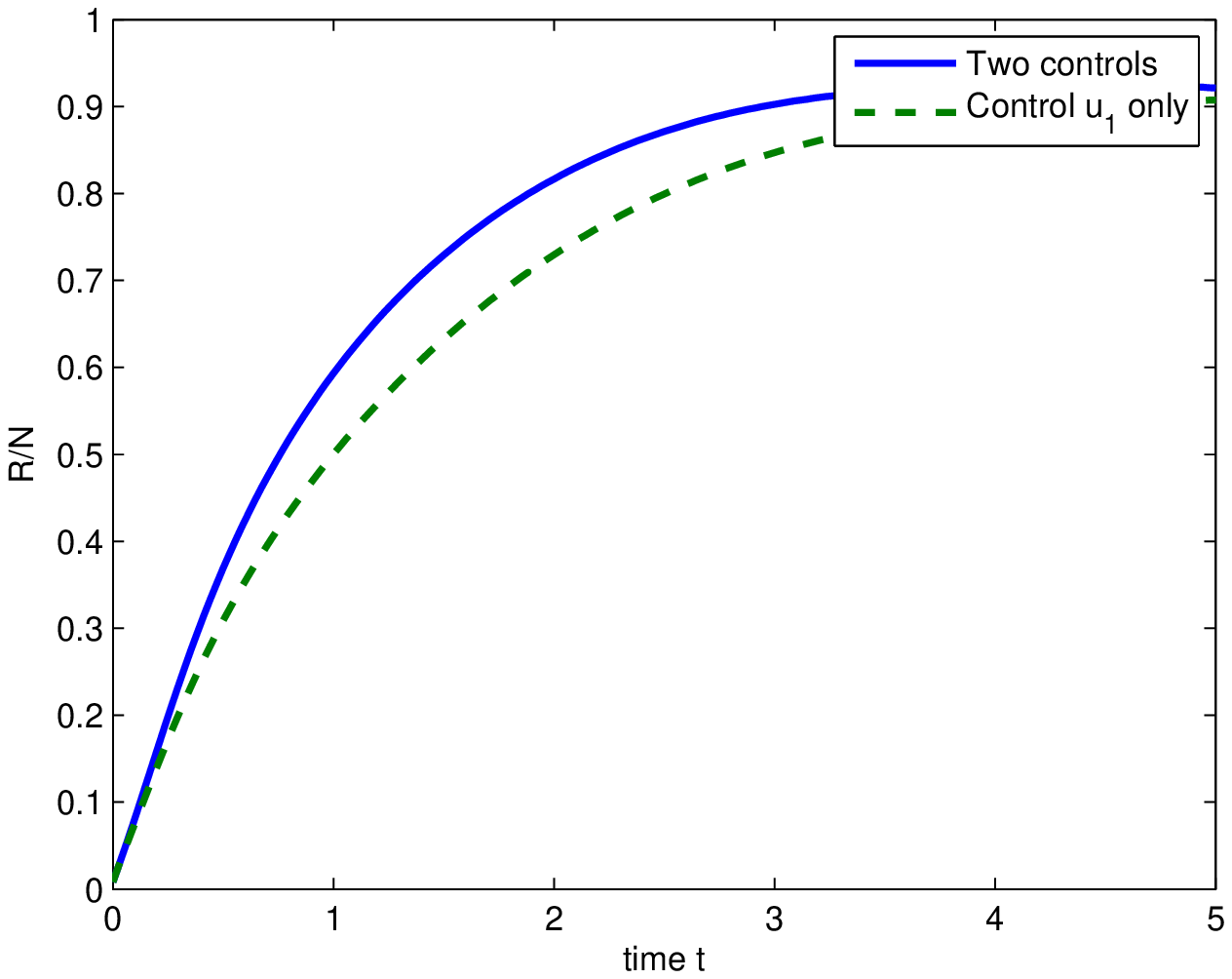}}
\caption{Optimal states for Strategy~1 (dashed line) and Strategy~3 (solid line)
in a time scale in units of years}
\label{fig:so:u1:statevar}
\end{center}
\end{figure}


\subsection{Strategy 2 versus Strategy~3}

When we compare Strategy~2 (only control $u_2$)
with Strategy~3 (both controls $u_1$ and $u_2$),
we observe that the best choice is definitely to use Strategy~3.
Indeed, with Strategy~3, there is a lower fraction of early and persistent
latent individuals as well as infected individuals
(see Figure~\ref{fig:so:u2:statevar} A, C, D and E).
The fraction of susceptible and recovered individuals
is higher when two controls are applied at the same time
(see Figure~\ref{fig:so:u2:statevar} B and F).
Similarly with Strategy~1, the value of the cost functional \eqref{costfunction}
associated with Strategy~3 is lower than that of Strategy~2.
\begin{figure}
\begin{center}
\subfloat[\footnotesize{$(I^* + L_2^*)/N$}]{\label{fig:so:u2:I:L2}
  \includegraphics[width=0.45\textwidth]{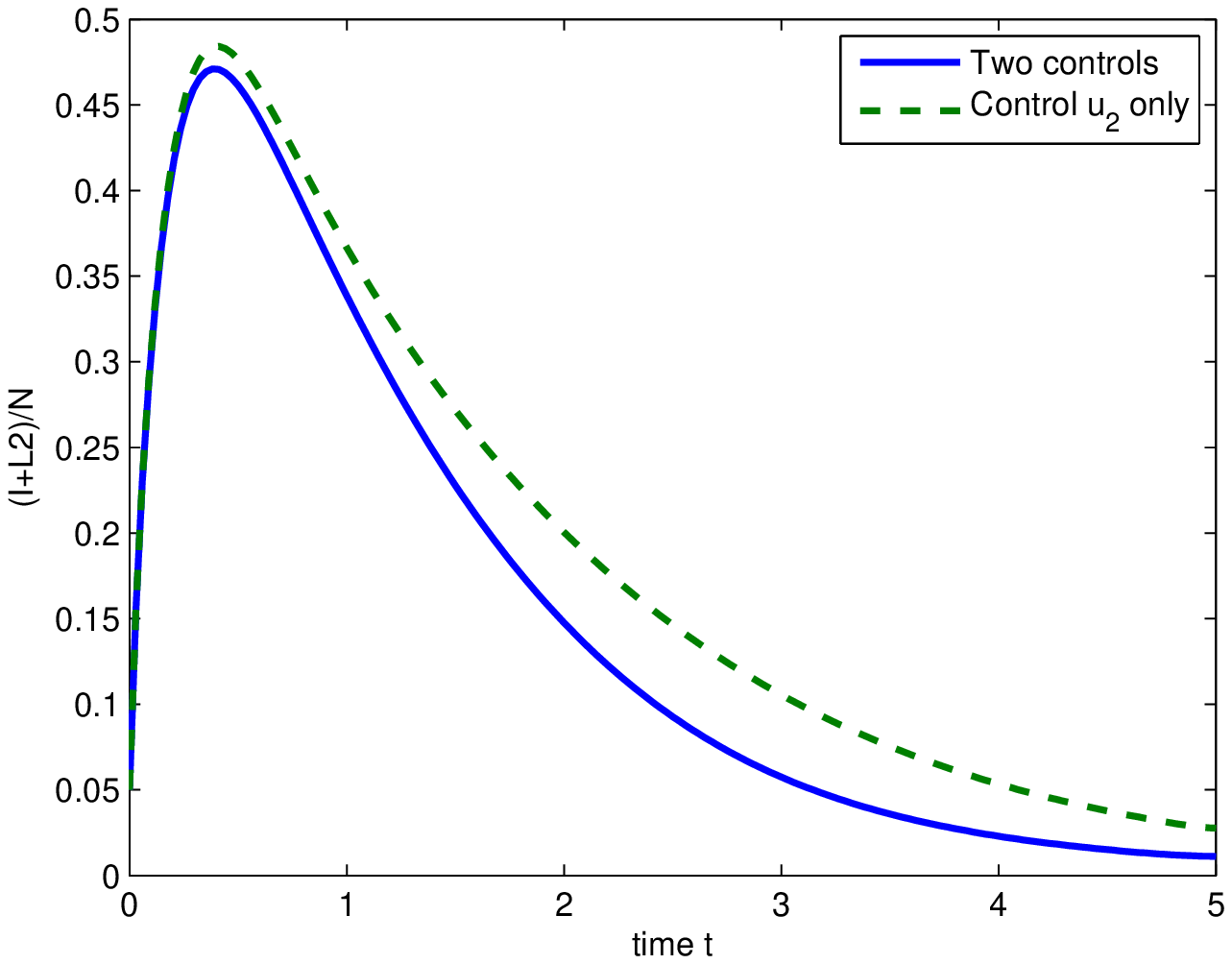}}
\subfloat[\footnotesize{$S^*/N$}]{
  \includegraphics[width=0.45\textwidth]{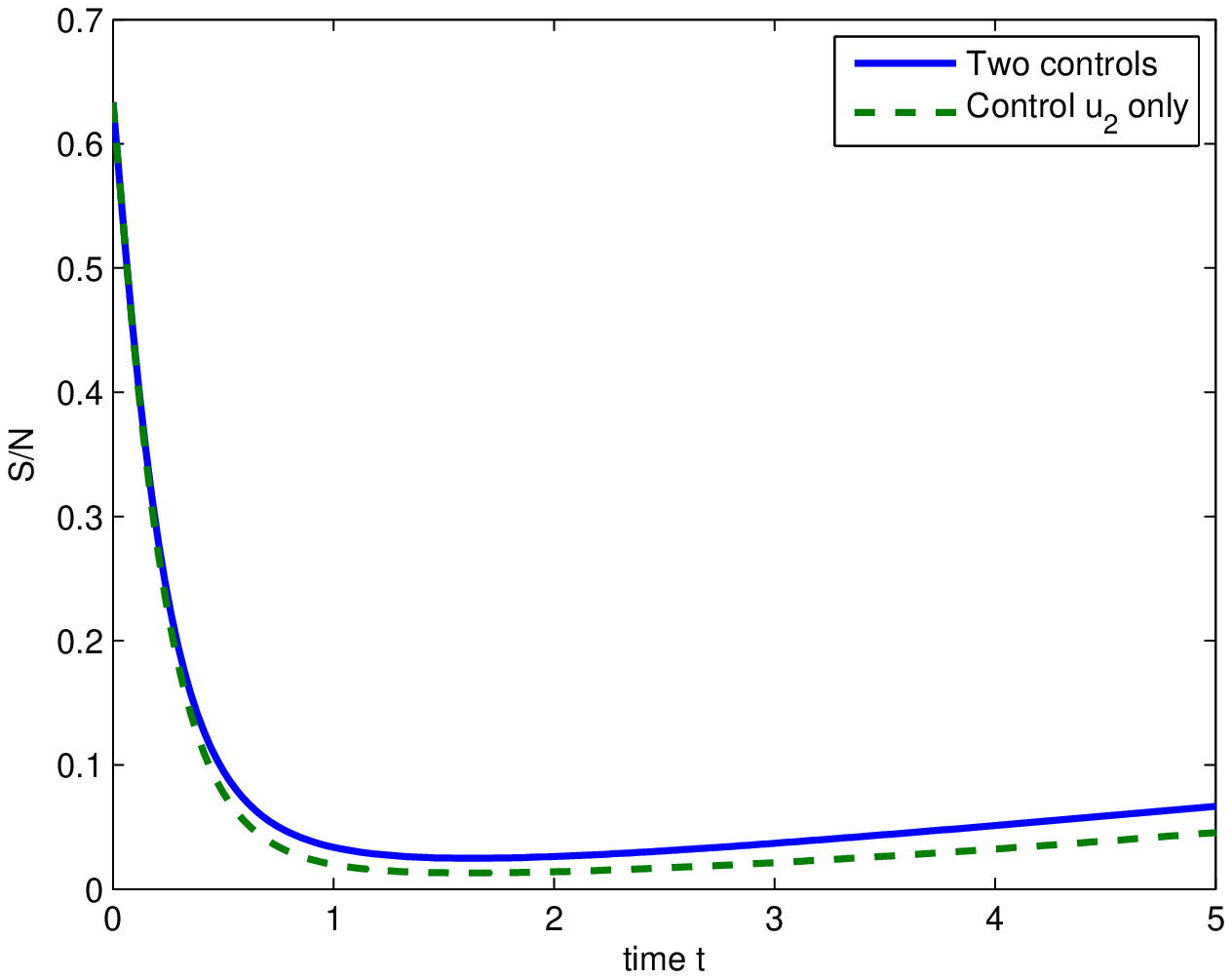}}
\newline
\subfloat[\footnotesize{$L_1^*/N$}]{\label{fig:so:u2:L1}
  \includegraphics[width=0.45\textwidth]{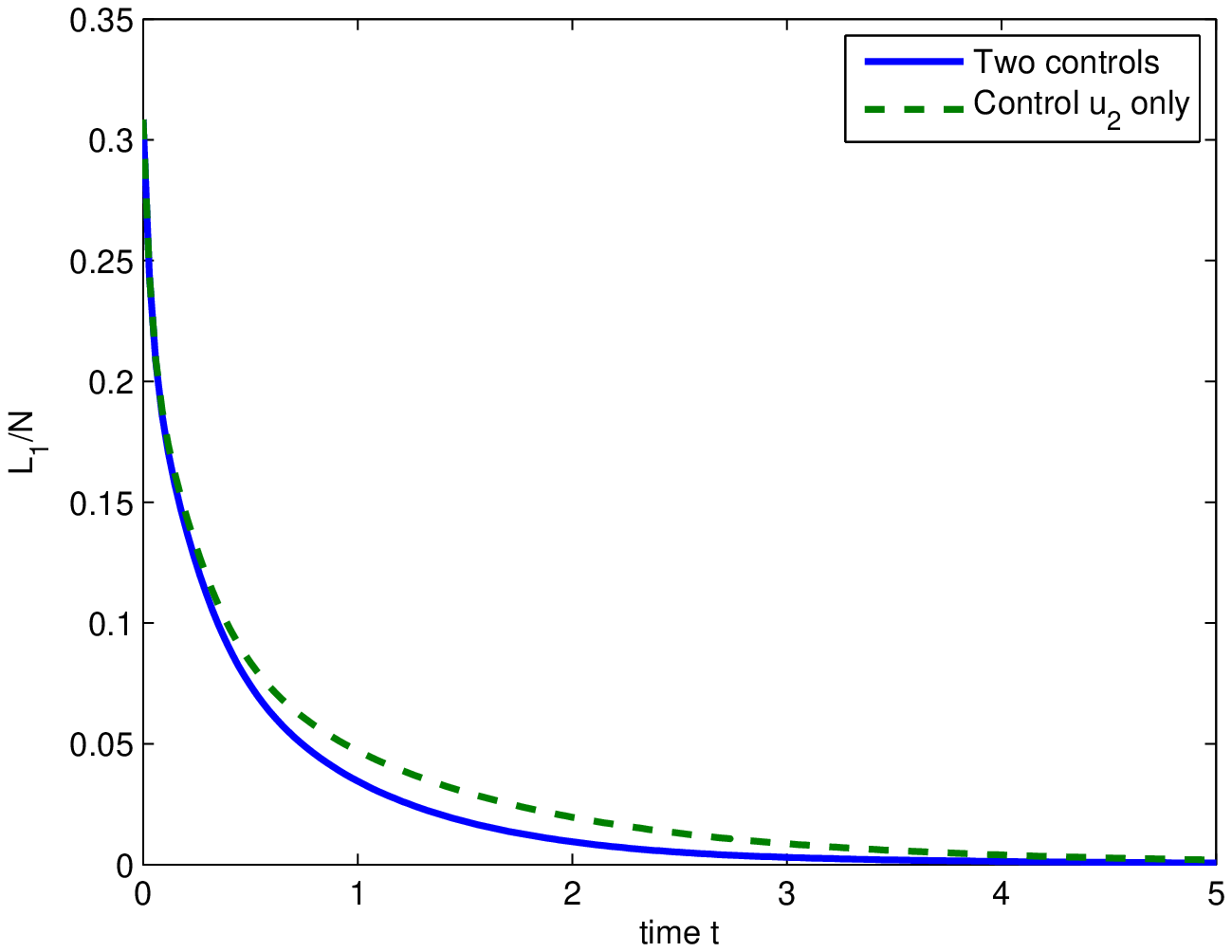}}
\subfloat[\footnotesize{$I^*/N$}]{\label{fig:so:u2:I}
  \includegraphics[width=0.45\textwidth]{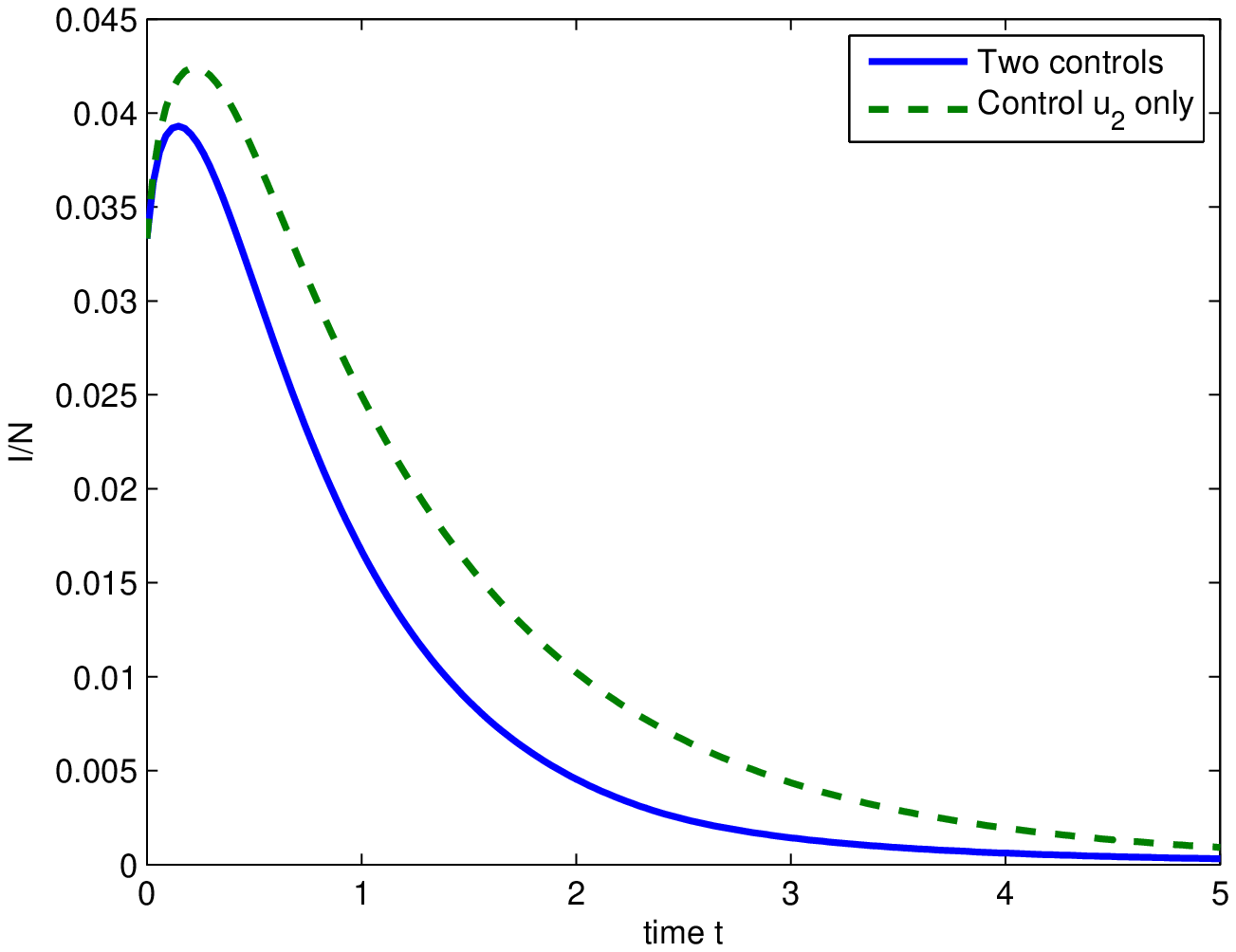}}
\newline
\subfloat[\footnotesize{$L_2^*/N$}]{\label{fig:so:u2:L2}
  \includegraphics[width=0.45\textwidth]{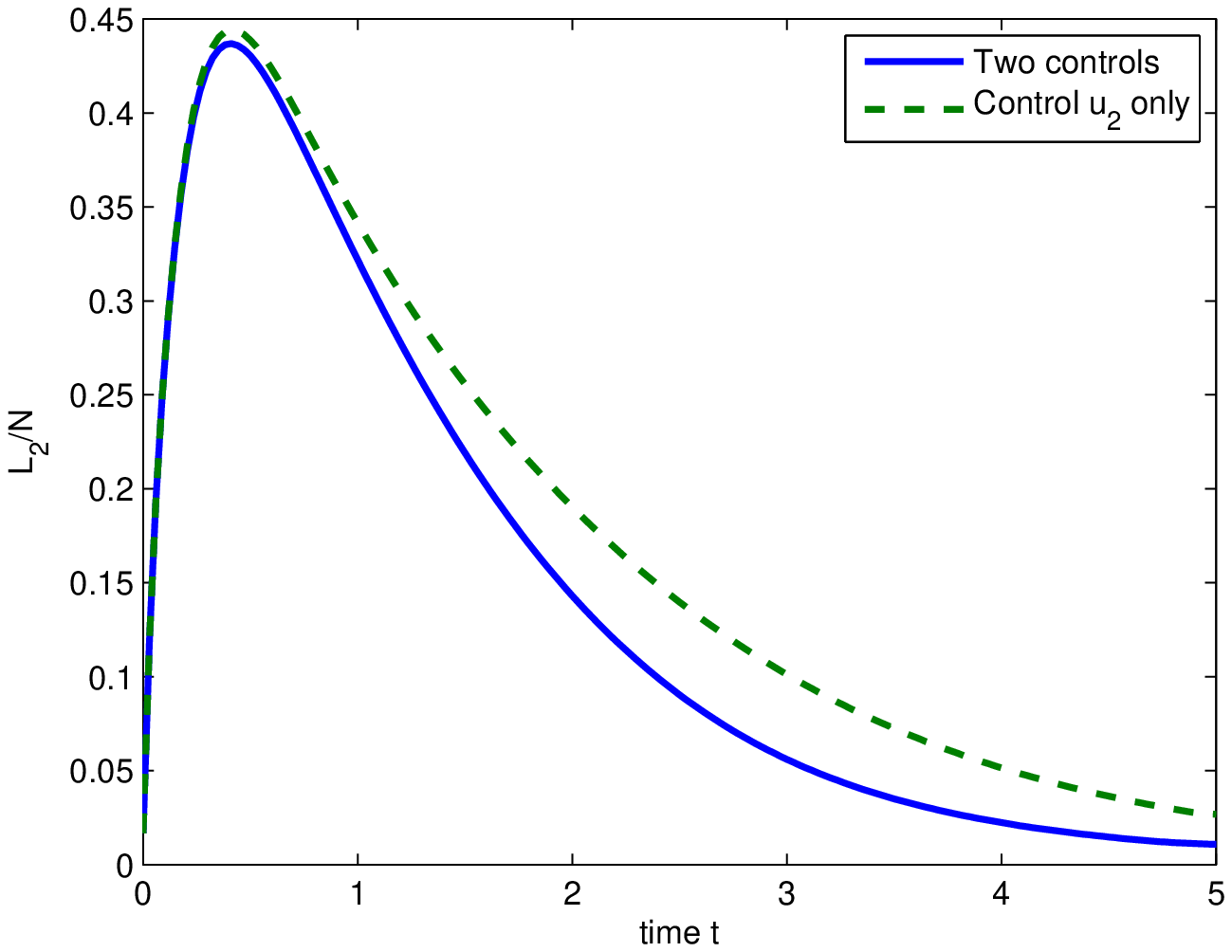}}
\subfloat[\footnotesize{$R^*/N$}]{\label{fig:so:u2:R}
  \includegraphics[width=0.45\textwidth]{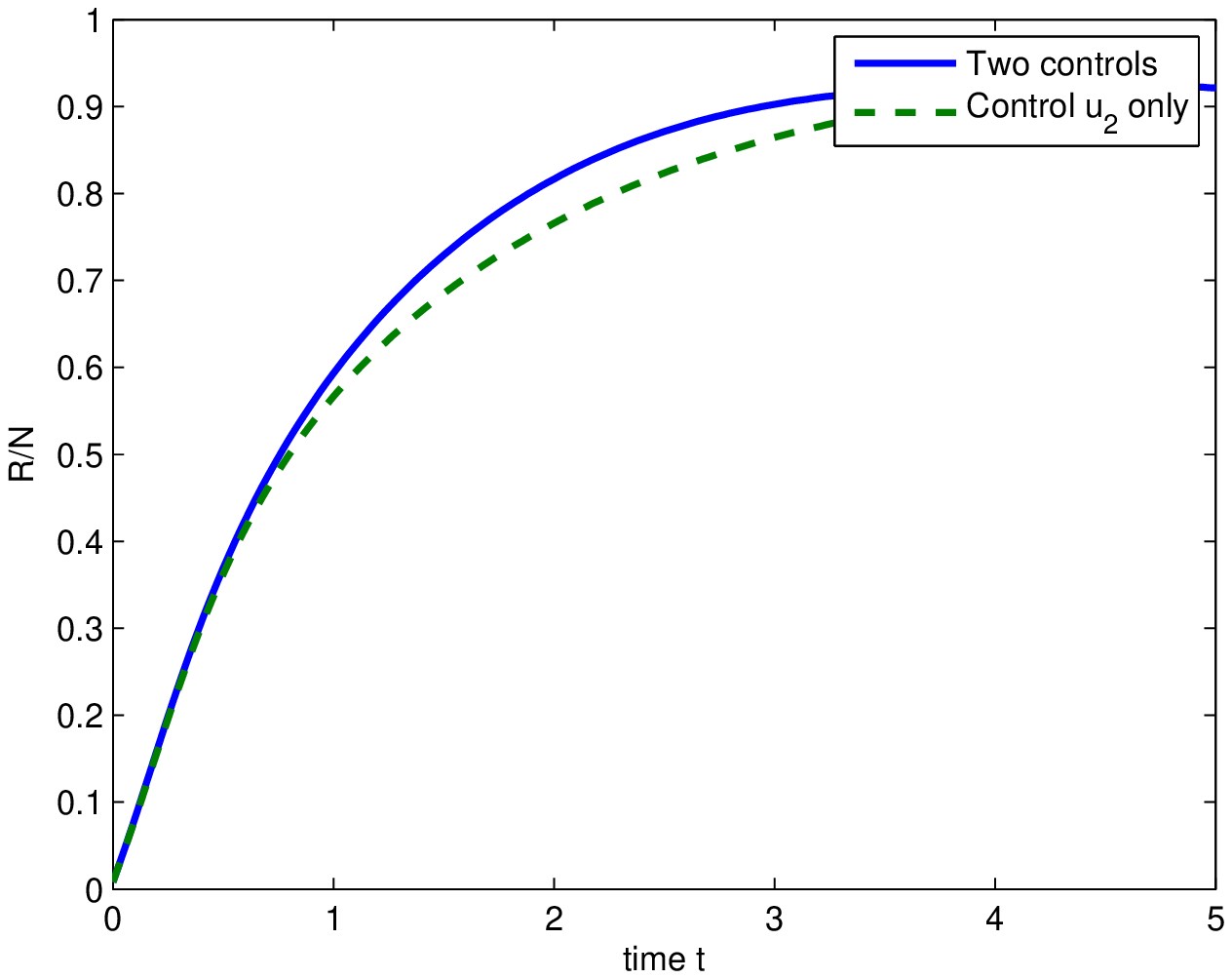}}
\caption{Optimal states for Strategy~2 (dashed line) and Strategy~3 (solid line)
in a time scale in units of years}
\label{fig:so:u2:statevar}
\end{center}
\end{figure}


\subsection{Changing the relative cost of interventions}
\label{sec:same}

So far all the simulations were done
assuming that the weight $W_1$ associated with
the control $u_1$ is greater than the weight $W_2$ associated
with $u_2$: $W_1 = 500$ and $W_2 = 50$ (see Table~\ref{parameters}).
We now consider the case when the weights are the same:
$W_1 = W_2 = 50$. In Figure~\ref{CompareWi:iguais:dif1}
we observe that a higher weight associated with the control $u_1$
implies that the optimal control $u_1^*$ stays
at the upper bound for a smaller duration. Surprisingly,
this change in the control does not result in
a change of the behavior of the state variables (see Figure~\ref{CompareWi:iguais:dif2}).
The results with $W_1 = W_2 = 50$ for Strategy~1
are similar to the ones reported in Section~\ref{sub:sec:st1}
(including the number of days that the optimal control $u_1^*$ stays at the upper bound).
\begin{figure}
\begin{center}
\subfloat[\footnotesize{Optimal control $u_1^*$}]{
\includegraphics[width=0.45\textwidth]{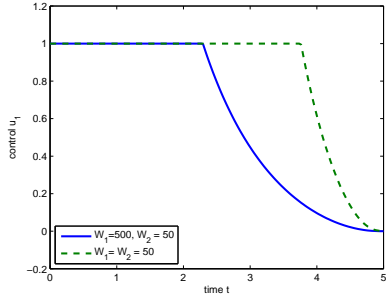}}
\caption{Optimal control $u_1^*$ for Strategy~3
in a time scale in units of years: $W_1=500$ and $W_2 = 50$ (solid line)
versus $W_1 = W_2 = 50$ (dashed line)}
\label{CompareWi:iguais:dif1}
\end{center}
\end{figure}
\begin{figure}
\begin{center}
\subfloat[\footnotesize{$(I^*+L_2^*)/N$}]{
  \includegraphics[width=0.45\textwidth]{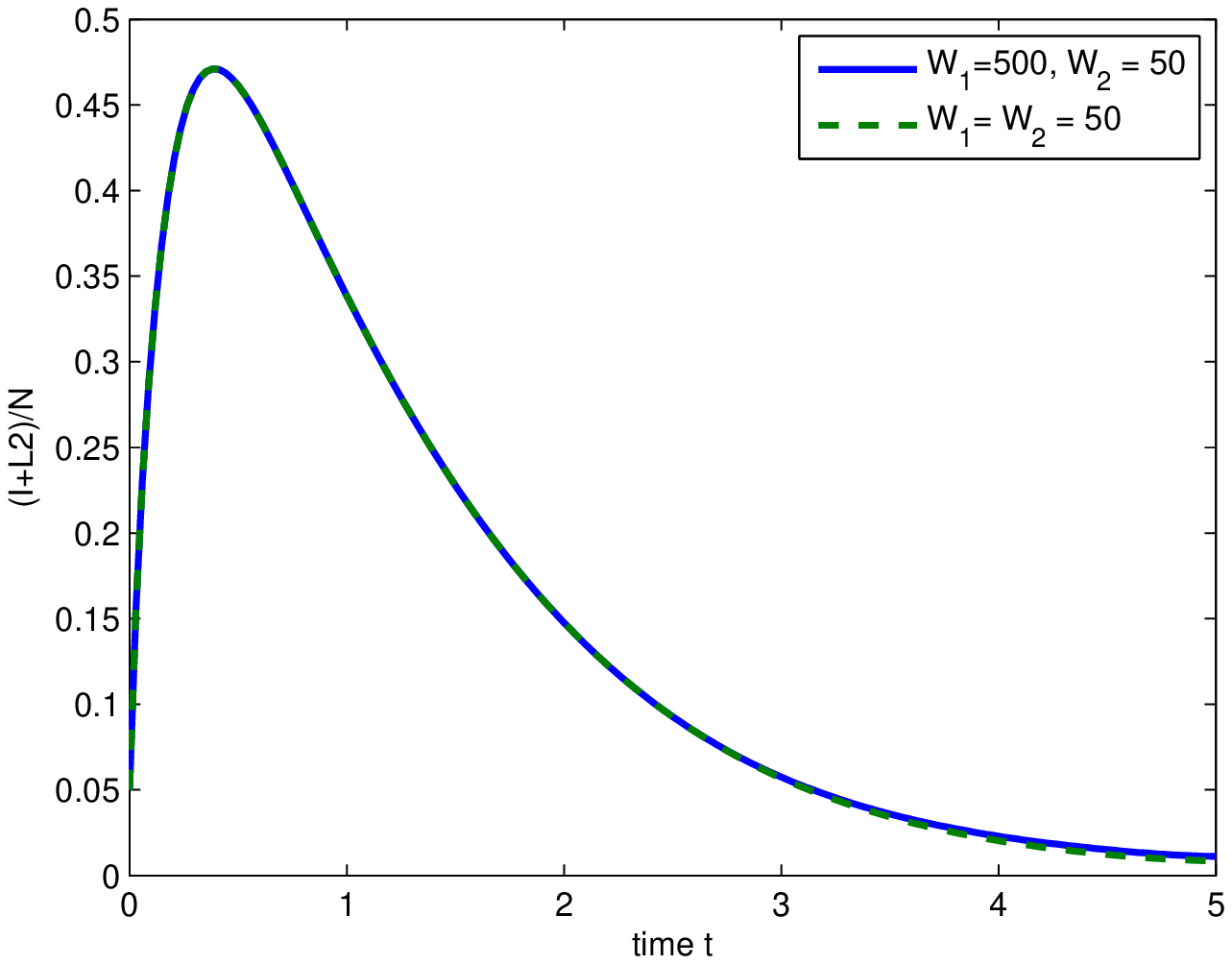}}
\subfloat[\footnotesize{$S^*/N$}]{
  \includegraphics[width=0.45\textwidth]{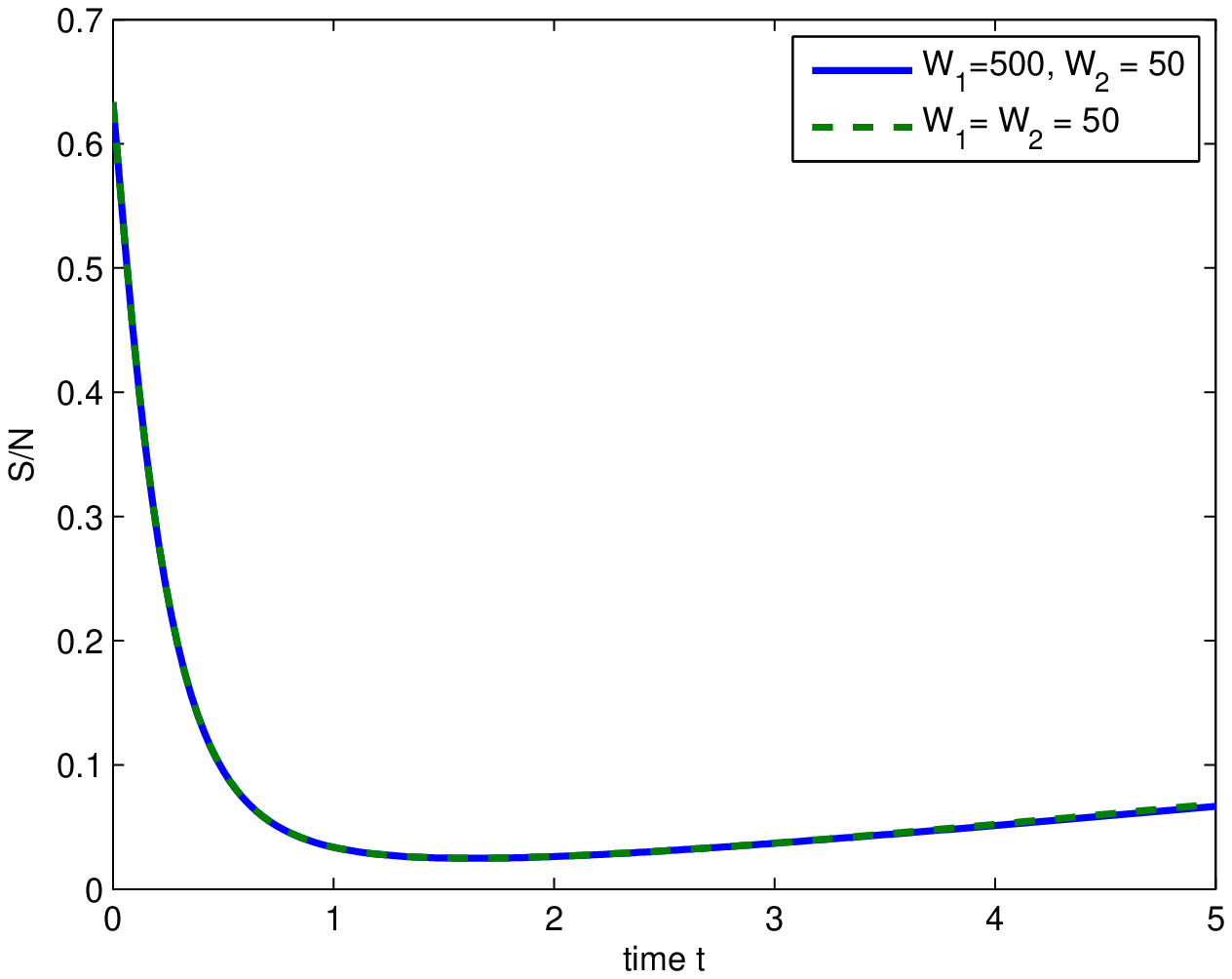}}
\newline
\subfloat[\footnotesize{$L_1^*/N$}]{\label{fig:Wi:igual:Wi:dif:L1}
  \includegraphics[width=0.45\textwidth]{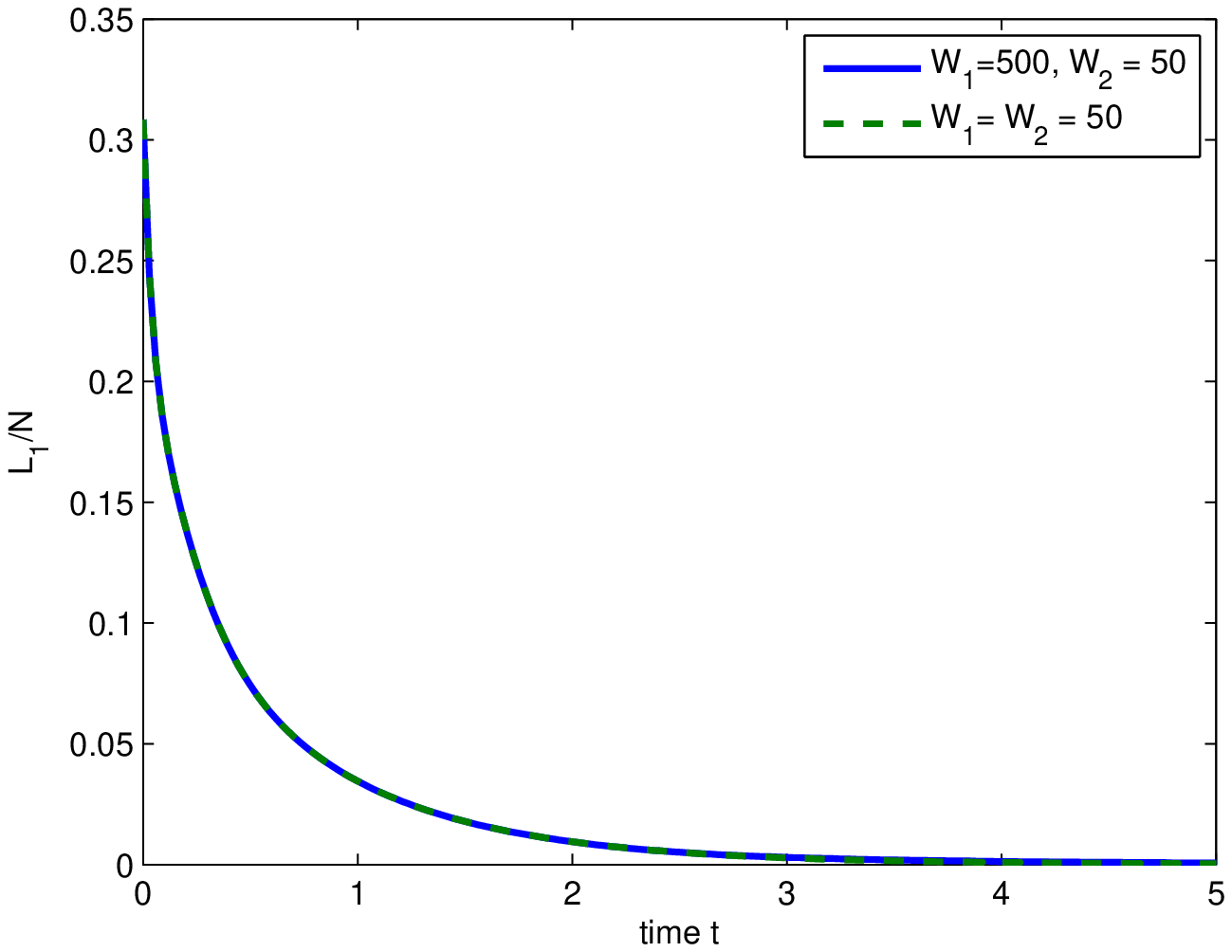}}
\subfloat[\footnotesize{$I^*/N$}]{\label{fig:Wi:igual:Wi:dif:I}
  \includegraphics[width=0.45\textwidth]{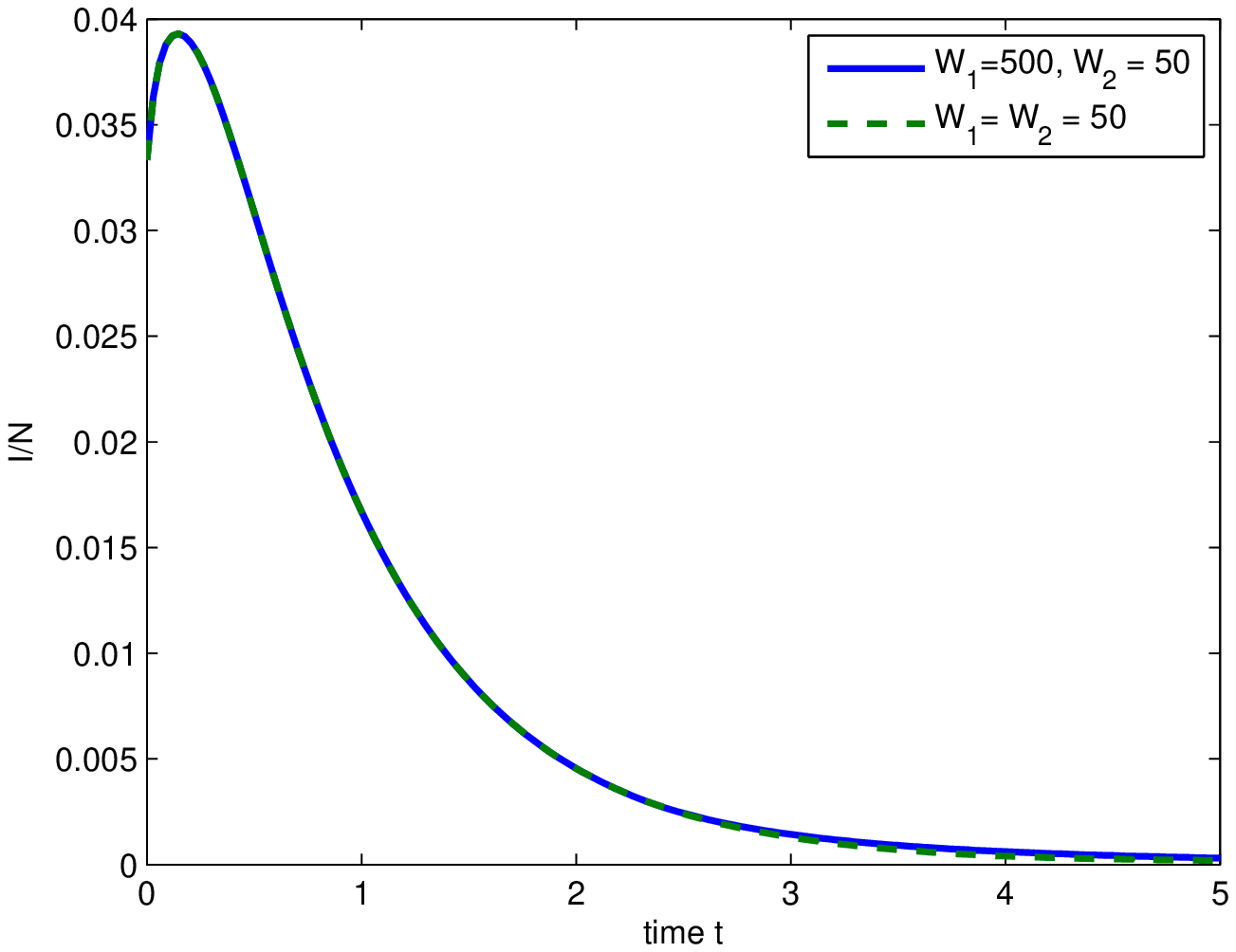}}
\newline
\subfloat[\footnotesize{$L_2^*/N$}]{\label{fig:Wi:igual:Wi:dif:L2}
  \includegraphics[width=0.45\textwidth]{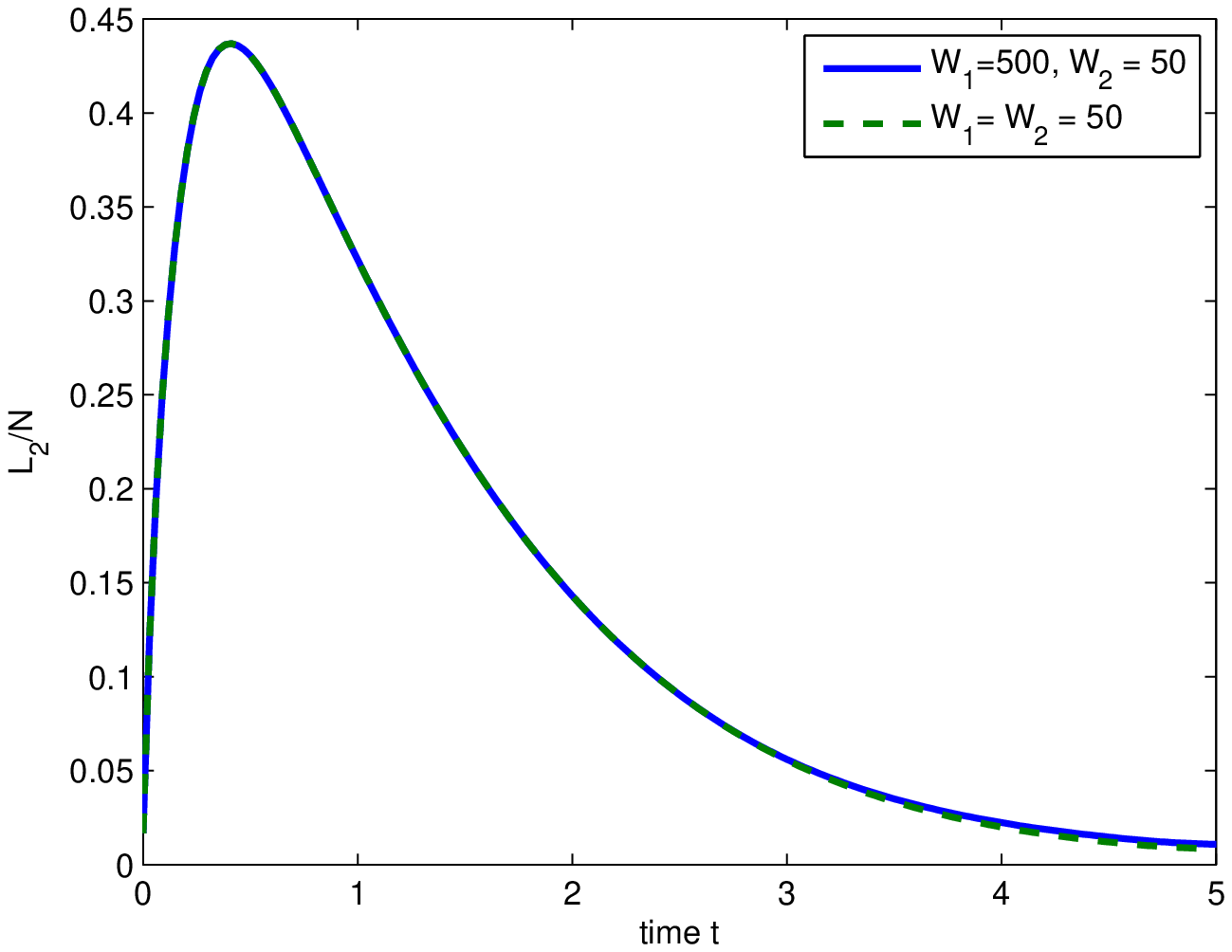}}
\subfloat[\footnotesize{$R^*/N$}]{\label{fig:Wi:igual:Wi:dif:R}
  \includegraphics[width=0.45\textwidth]{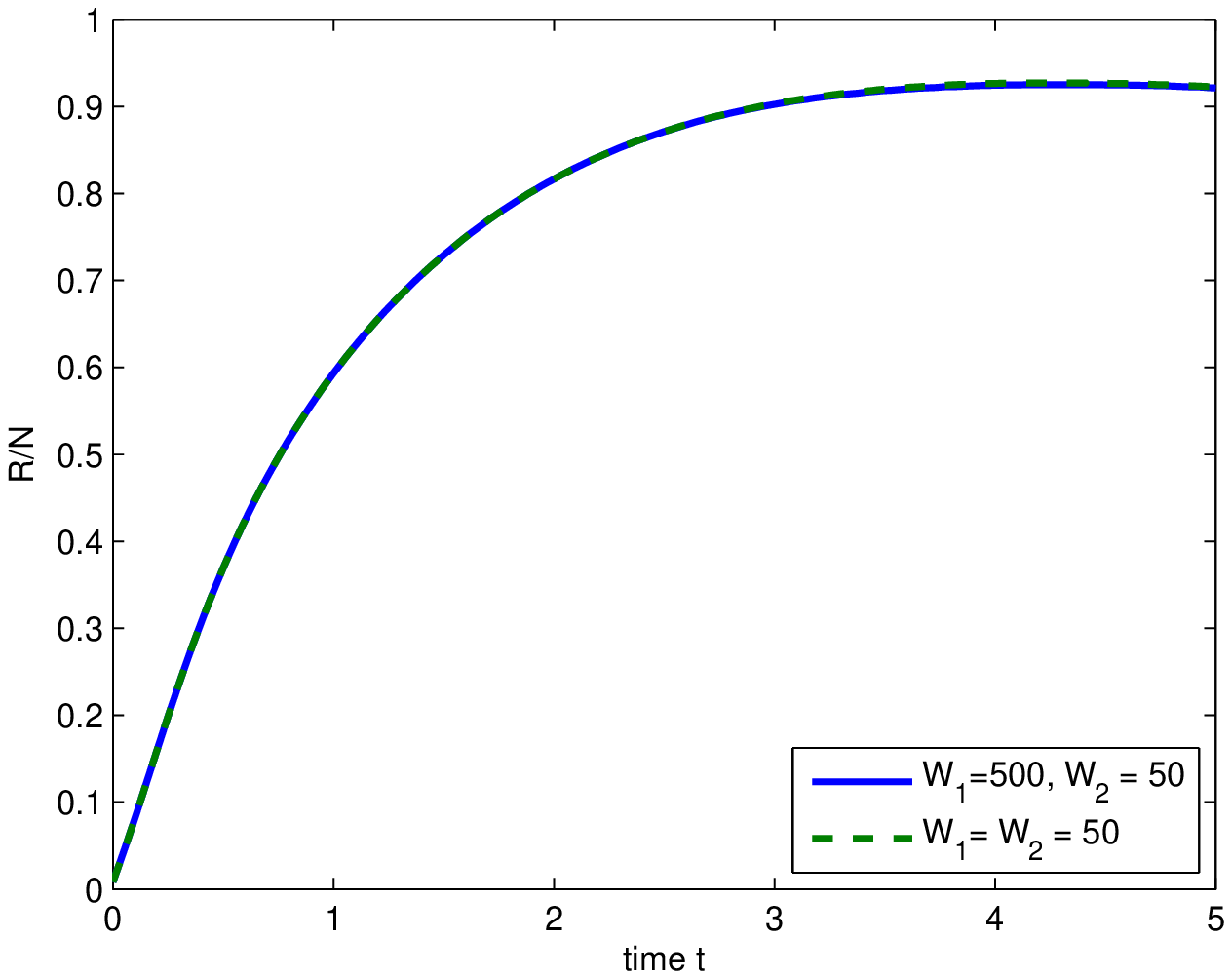}}
\caption{Optimal states for Strategy~3 in a time scale in units of years:
$W_1=500$ and $W_2 = 50$ (solid line) versus $W_1 = W_2 = 50$ (dashed line)}
\label{CompareWi:iguais:dif2}
\end{center}
\end{figure}


\section{Conclusion}
\label{sec:conc}

Tuberculosis (TB) is an important health issue all over the world,
particularly in many African countries.
In this article we focused our attention in Angola where,
since 2009, it is recognized by authorities
that TB and HIV are the two most serious
public health problems. Efforts to control TB in Angola
have been underway since 1981, but results are still
not satisfactory, and minimizing
the effects caused by TB is an important challenge.
Implemented measures to control these problems
are having a positive effect, but the reduction rate
is very slow \cite{url_vice_Min_Saude_2009}.

We introduced two control functions $u_1$ and $u_2$
to an existing mathematical model for TB developed
in \cite{Gomes_etall_2007}. These controls are associated
with measures that help to reduce the number of active infected and persistent
latent TB individuals: the control $u_1$
represents the effort that prevents the failure of treatment
in active TB infectious individuals $I$, \textrm{e.g.}, supervising the patients,
helping them to take the TB medications regularly and to complete the TB treatment;
while the control $u_2$ governs the fraction of persistent latent individuals $L_2$
under treatment with anti-TB drugs. An optimal control problem was formulated
and solved theoretically using the Pontryagin maximum principle.
The solution to the problem was then illustrated by numerical simulations
using available data from Angola. From the numerical results,
one may conclude that considering only control $u_1$ (Strategy~1)
or only control $u_2$ (Strategy~2) does not lead to the best results
in terms of the number of active infected
and persistent latent individuals. A combined strategy
(Strategy~3) that involves both controls is preferable.
Figure~\ref{fig:without:controls:2} shows the simulation
of the system for the uncontrolled case,
when $u_1 = 0$ and $u_2 = 0$, compared with the strategies
here proposed. Results justify the need for intervention in tuberculosis treatment.
The values of Table~\ref{without:controls} put in evidence that
the uncontrolled situation is the worst: does not only result in
more infected and persistent latent individuals
(1,584 individuals at the end of five years versus
334 individuals in the case Strategy~3 is applied) but also
gives a higher value to the cost functional \eqref{costfunction}.
\begin{figure}
\begin{center}
\subfloat[\footnotesize{$(I+L_2)/N$}]{
  \includegraphics[width=0.45\textwidth]{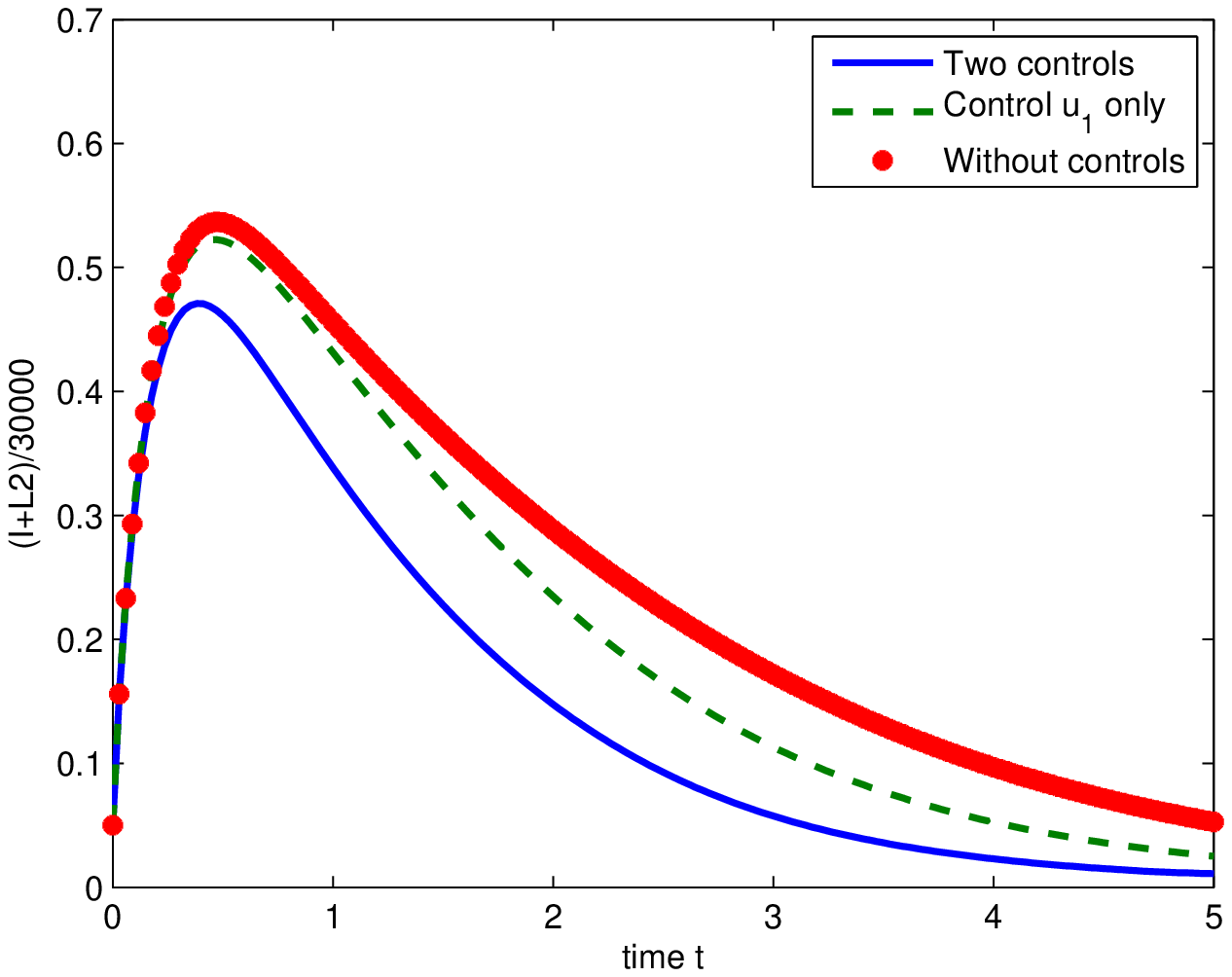}}
\subfloat[\footnotesize{$S/N$}]{
  \includegraphics[width=0.45\textwidth]{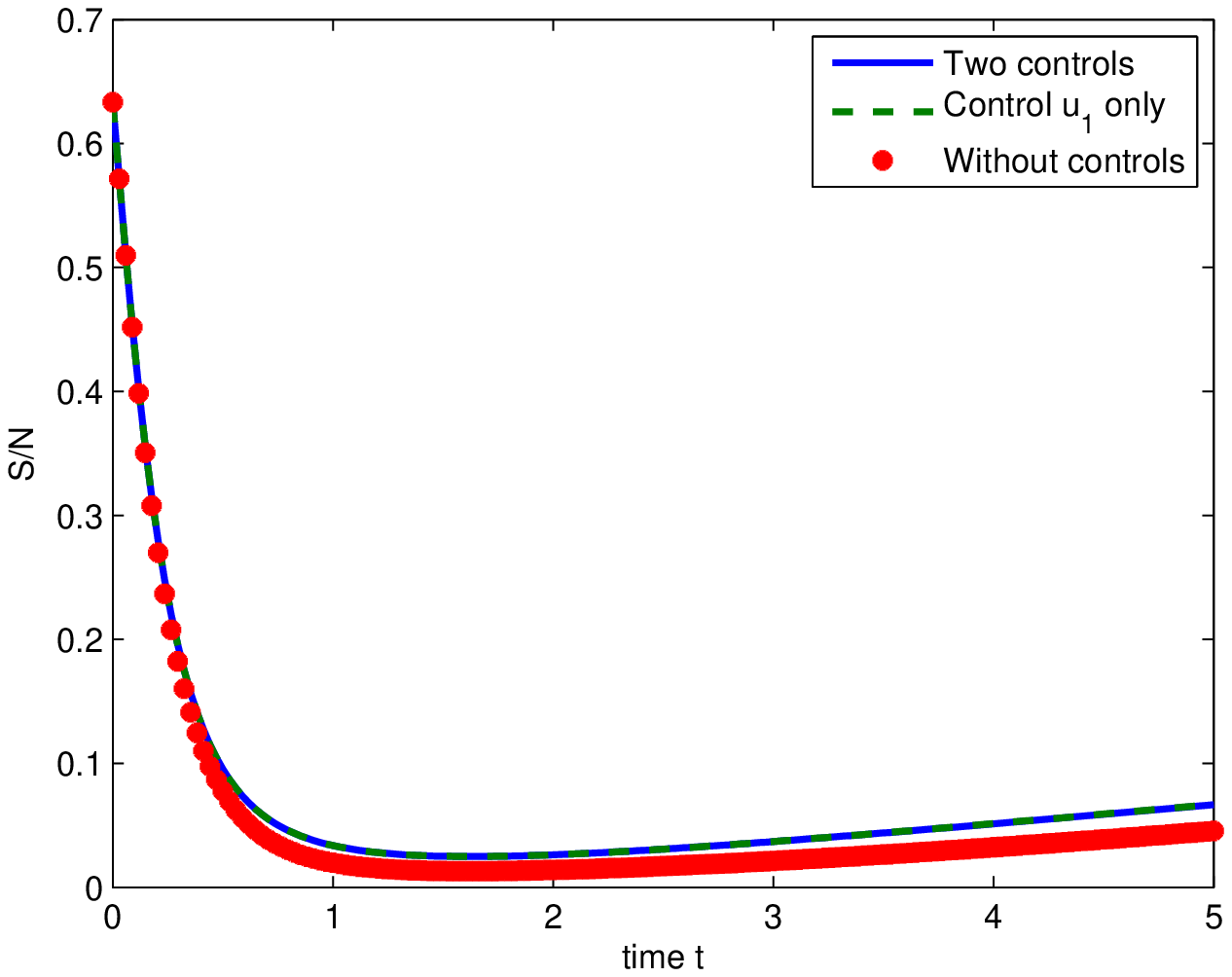}}
\newline
\subfloat[\footnotesize{$L_1/N$}]{
  \includegraphics[width=0.45\textwidth]{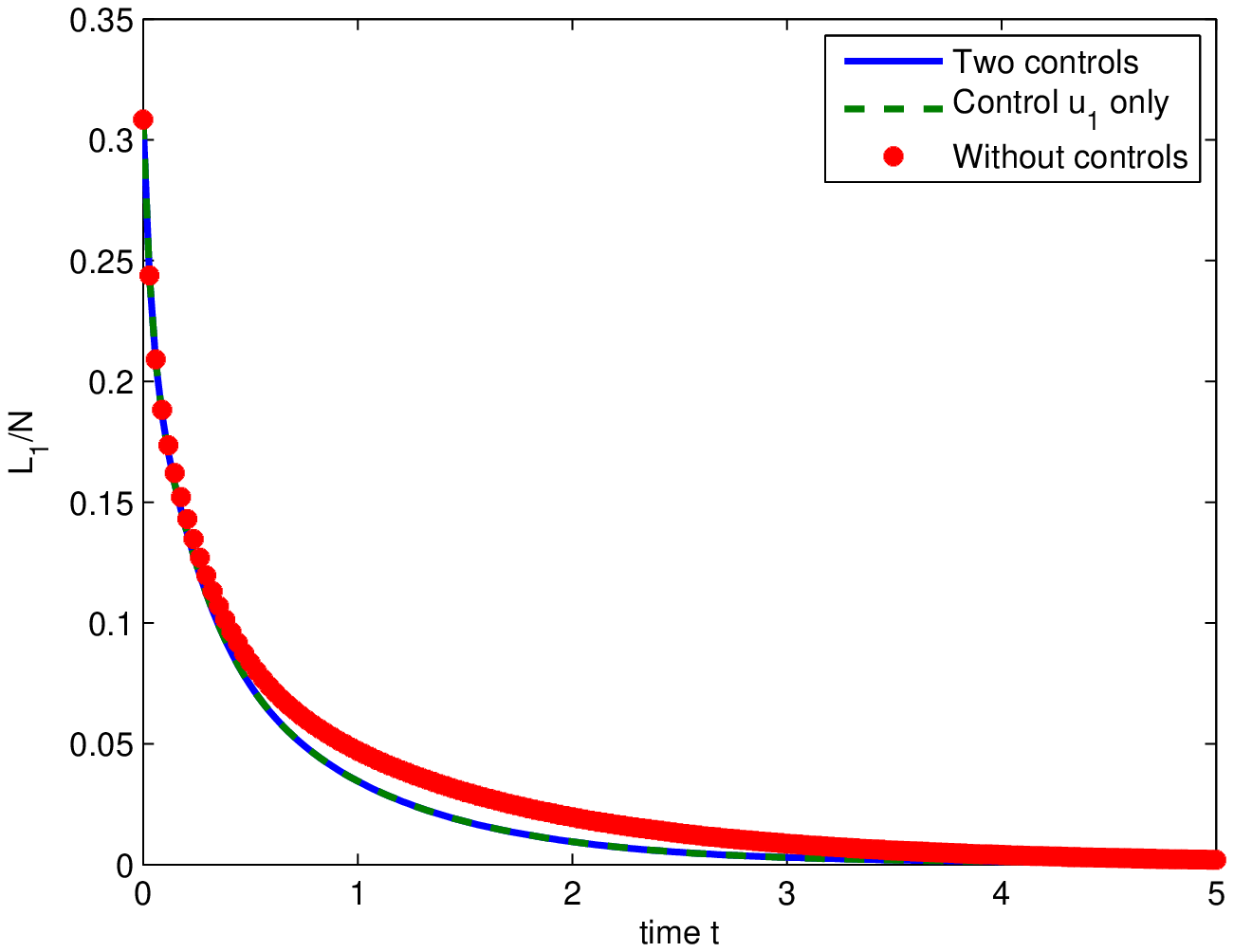}}
\subfloat[\footnotesize{$I/N$}]{
  \includegraphics[width=0.45\textwidth]{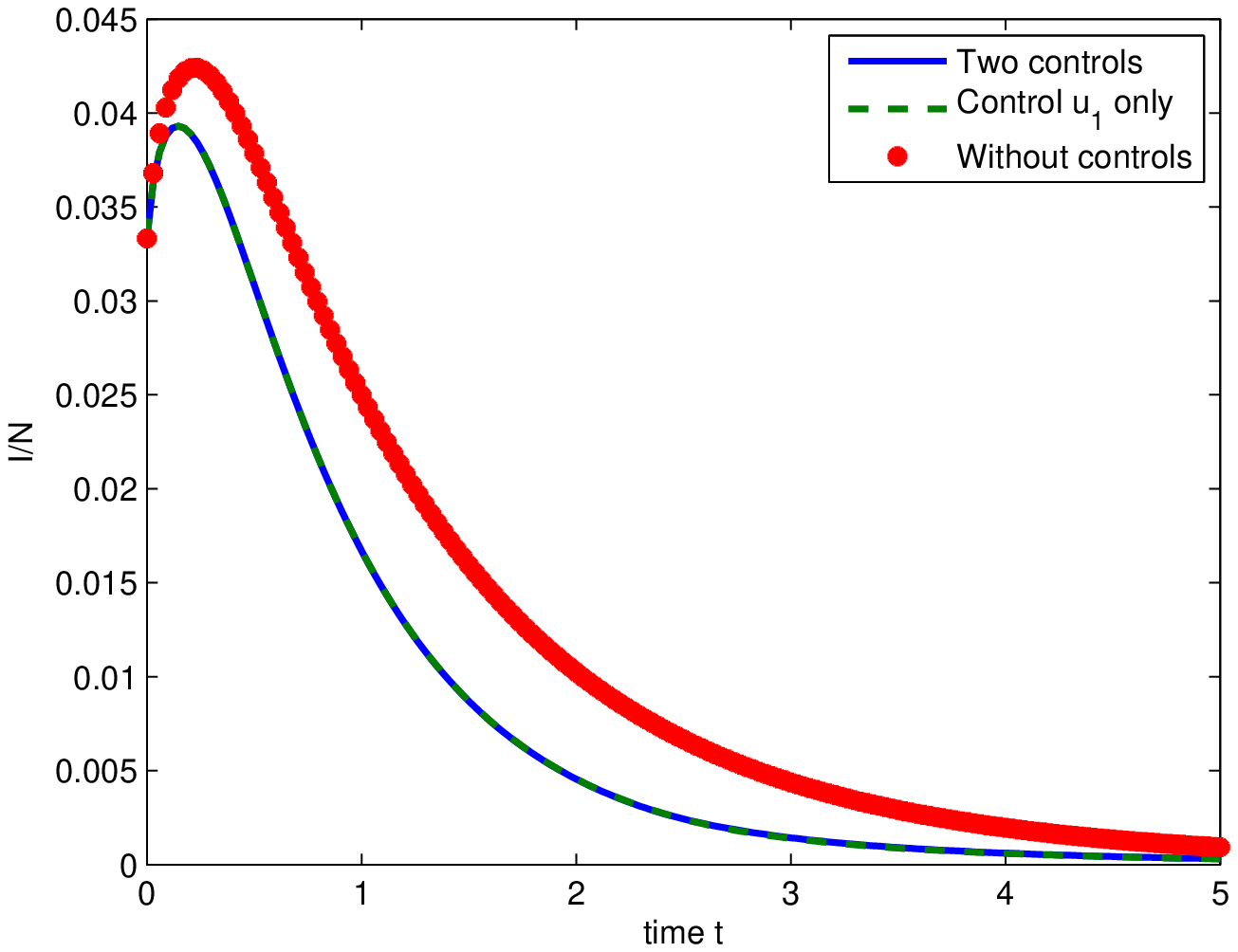}}
\newline
\subfloat[\footnotesize{$L_2/N$}]{
  \includegraphics[width=0.45\textwidth]{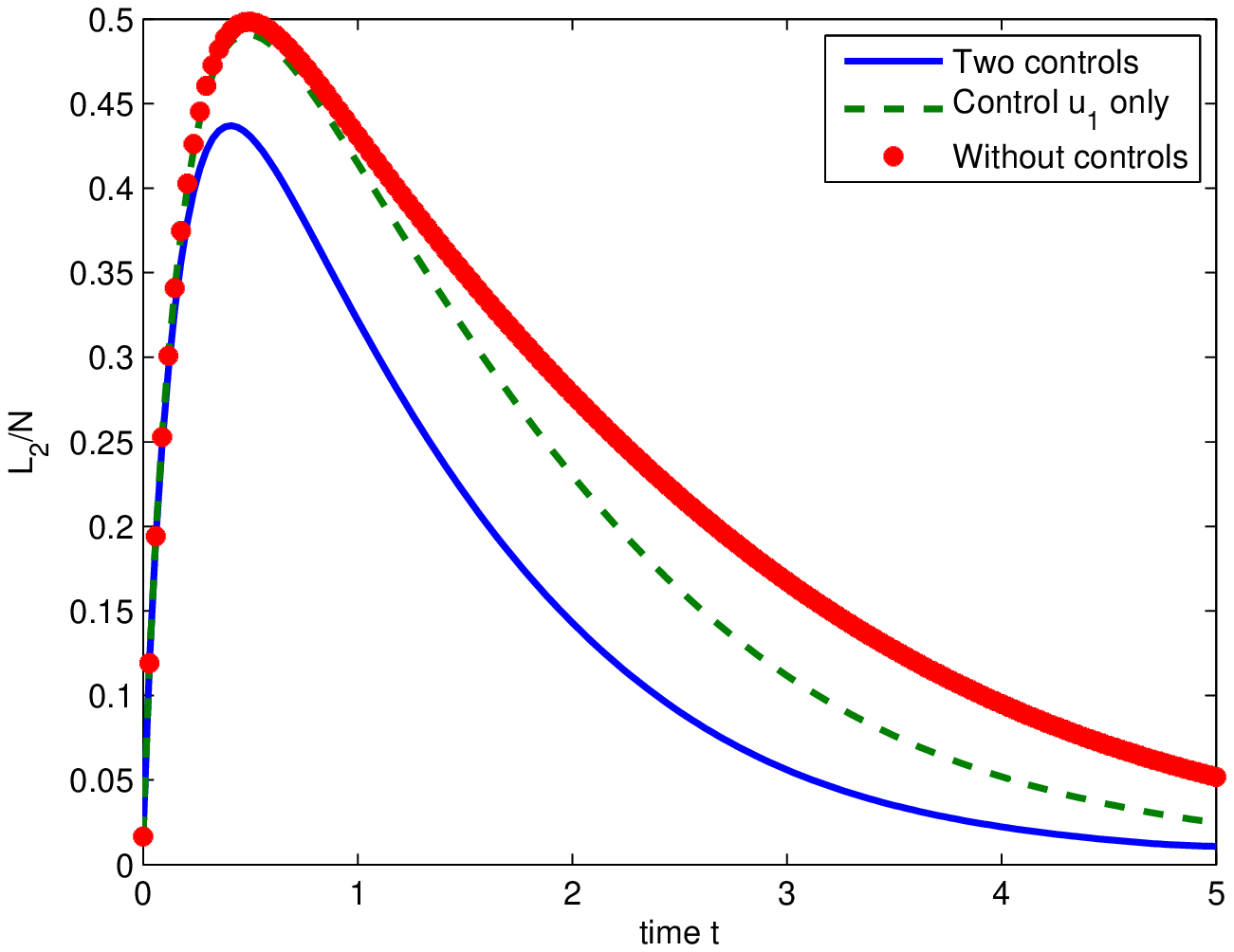}}
\subfloat[\footnotesize{$R/N$}]{
  \includegraphics[width=0.45\textwidth]{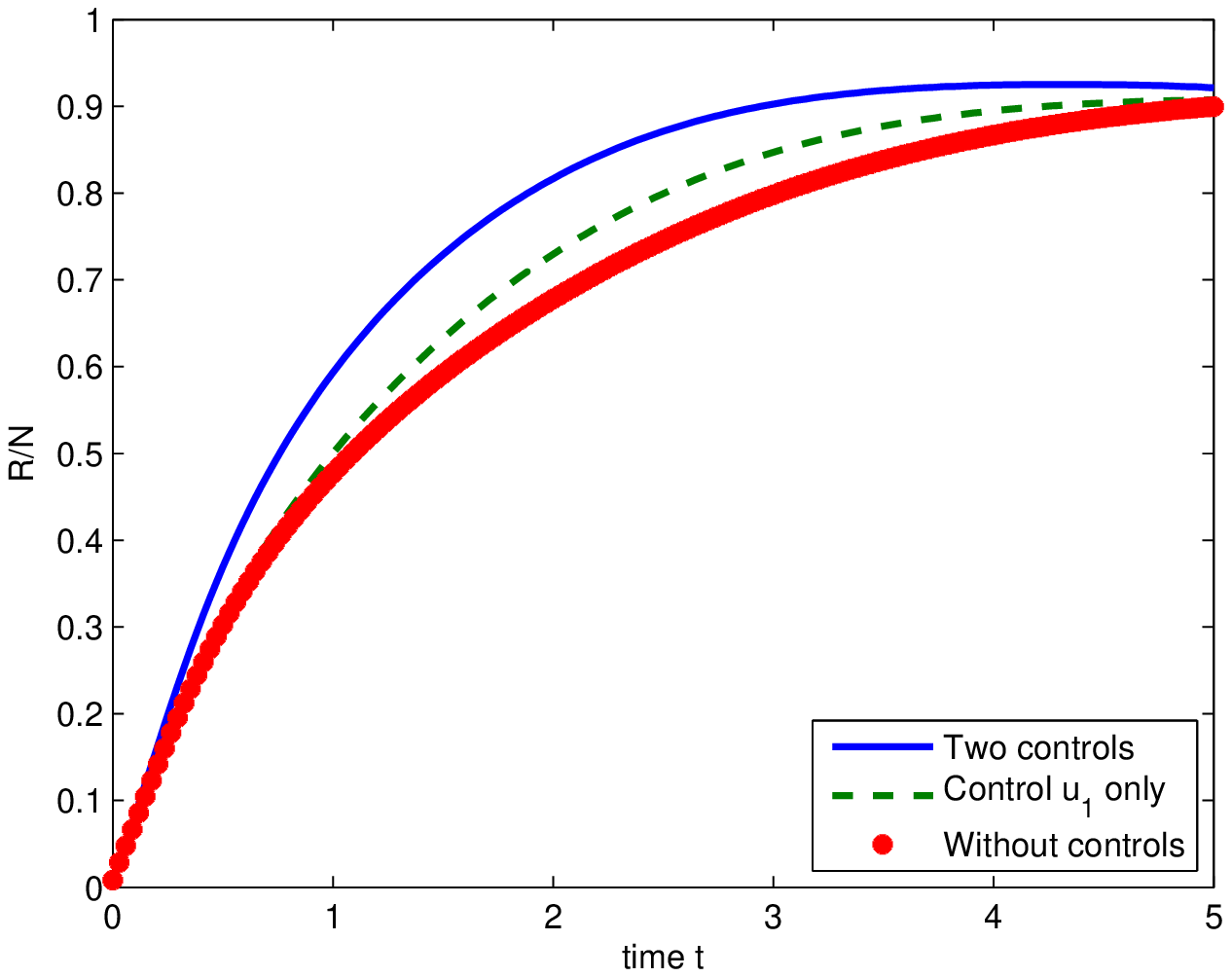}}
\caption{State variables with and without controls in a time scale in units of years}
\label{fig:without:controls:2}
\end{center}
\end{figure}
\begin{table}[!htb]
\centering
\begin{tabular}{|l | l | l  | l | l |}
\hline
 & {\small{Strategy 1}} & {\small{Strategy 2}} & {\small{Strategy 3}} & {\small{Without controls}}\\
\hline
{\small{$I(5)+L_2(5)$}} & {\small{$\simeq 751$}} & {\small{$\simeq 831$}} & {\small{$\simeq 334$}} & {\small{$\simeq 1,584$}} \\
\hline
{\small{Cost: functional \eqref{costfunction}}} & {\small{$\simeq 32,511.2$}}
& {\small{$\simeq 28,585.9$}} & {\small{$\simeq 24,133.1$}} & {\small{$\simeq 37,760.3$}} \\
\hline
\end{tabular}
\caption{Comparison of Strategies 1, 2, 3 with the uncontrolled case,
\textrm{i.e.}, when $u_1 = u_2 = 0$}
\label{without:controls}
\end{table}


\section*{Acknowledgments}

This work was supported by FEDER funds through COMPETE
(Operational Programme Factors of Competitiveness)
and by Portuguese funds through the Center for Research and Development
in Mathematics and Applications (University of Aveiro) and the Portuguese Foundation
for Science and Technology (FCT), within project PEst-C/MAT/UI4106/2011
with COMPETE number FCOMP-01-0124-FEDER-022690.

The authors are grateful to Ryan Loxton
for sharing with them his expertise on
the MISER optimal control software \cite{miser,MR2527217},
and suggestions for improving the English;
and to two anonymous referees for valuable and prompt remarks
and comments, which significantly contributed to the quality of the paper.



\medskip


\end{document}